\documentclass[12pt,twoside,reqno]{amsart}
\usepackage{amsmath, amssymb, amsthm, color}
\usepackage[colorlinks=false,citecolor=blue]{hyperref}
\usepackage[tmargin=1in,bmargin=1in,rmargin=1in,lmargin=1in]{geometry}

\newtheorem{theorem}{Theorem}[section]
\newtheorem{lemma}[theorem]{Lemma}
\newtheorem{prop}[theorem]{Proposition}
\newtheorem{cor}[theorem]{Corollary}
\theoremstyle{definition}

\theoremstyle{remark}
\newtheorem{remark}[theorem]{\bf{Remark}}
\numberwithin{equation}{section}
\allowdisplaybreaks
\begin{document}

\title[ Refined numerical radius estimates and Euclidean operator radius]
{ Refined numerical radius estimates and Euclidean operator radius}

\author{Pintu Bhunia and Rukaya Majeed}
\address[Bhunia]{Department of Mathematics, SRM University AP, Amaravati 522240, Andhra Pradesh, India}
\email{\tt pintubhunia5206@gmail.com ; pintu.b@srmap.edu.in}

\address[Majeed]{Department of Mathematics, SRM University AP, Amaravati 522240, Andhra Pradesh, India} 
\email{rukayamajeed20@gmail.com ; rukaya\_majeed@srmap.edu.in} 


\subjclass[2020]{47A12, 47A30, 15A60}

\keywords{Numerical radius, operator norm, Euclidean operator radius,  Euclidean operator norm, Joint numerical radius}

\date{\today}
\maketitle

\begin{abstract}
   We obtain new lower and upper bounds for the numerical radius of a bounded linear operator $A$ on a complex Hilbert space, which refine the existing ones. 
   In particular, if $w(A)$ and $\|A\|$ denote the numerical radius and operator norm of $A$, respectively, then we show that 
     \begin{eqnarray*}
       \nu(A) + \frac{1}{4} \left\||A|^2+|A^*|^2\right\| 
         \leq w^2(A) \leq  \frac12 w\left(\frac{|A|+|A^*|}{2}A \right)+ \frac14 \left\| |A|^2+ \left( \frac{|A|+|A^*|}{2}\right)^2 \right\|, 
    \end{eqnarray*}
    where $\nu(A)\geq 0$ is a real number involving the operator norm of the Cartesian decomposition of $A$. We also develop several new numerical radius inequalities for the products and sums of operators via Euclidean operator radius of $2$-tuples of operators.
    In addition, we deduce equality characterizations for the inequalities.
    As an application, we obtain numerical radius inequalities for the commutators of operators, which improves the Fong and Holbrook's inequality $w(AB\pm BA) \leq 2\sqrt{2} w(A) \|B\|$ [Canadian J. Math. 1983].
    \end{abstract}


\section{Introduction}

Suppose $\mathcal{B}(\mathcal{H})$ denotes the $C^*$-algebra of all bounded linear operators on a complex Hilbert space $\mathcal{H}$.
The Cartesian decomposition of $A\in \mathcal{B}(\mathcal{H})$ is defined as $A=Re(A)+ i Im(A),$ where $Re(A)=\frac{1}{2}(A+A^*)$ and $Im(A)=\frac{1}{2i}(A-A^*)$. We denote $|A|=\sqrt{A^*A}$, where $A^*$ is the adjoint of $A$.
 The operator norm and numerical radius of $A\in\mathcal{B}(\mathcal{H})$ are defined, respectively, as
\[\|A\|=\sup \{ \|Ax\|: x\in \mathcal{H}, \|x\|=1\} {\text{ and }}  w(A)=\sup \{ \left|\left<Ax,x\right>\right| : x\in \mathcal{H}, \|x\|=1\}.\]
The search of possible relations between these two quantities, namely $\|A\|$ and $w(A)$, has acquired the interest of numerous researchers due to the simplicity we find in computing $\|A\|$, and to the possible observations that may arise from such relations.
As a basic relation in this direction, we have \cite[Theorem 1.3-1]{Gustafson_Book_1997}:
\begin{equation}\label{Eq_Intro_Equiv}
\frac{1}{2}\|A\|\leq w(A)\leq \|A\|,
\end{equation}
\text{for all} $A\in\mathcal{B}(\mathcal{H}).$
These bounds are sharp, $\frac{1}{2}\|A\|=w(A)$ if $A^2=0$ and $ w(A)= \|A\|$ if $A$ is normal.
These classical bounds have been refined by several researchers. 
Here, first we mention the following well known bounds (see \cite{Kittaneh1, Kittaneh2, DRAG}):
\begin{equation}\label{Kittaneh_Eq_2}
w(A) \leq \frac{ 1}{2} \left\|     |A|+|A^*| \right\| ,
\end{equation}

\begin{equation}\label{Kittaneh_Eq_1}
{\frac{1}{4}\left\| |A|^2+|A^*|^2 \right|} \leq w^2(A)\leq {\frac{1}{2}\left\| |A|^2+|A^*|^2\right\|}
\end{equation}
and
\begin{eqnarray}\label{EQNDRA}
    w^2(A) \leq {\frac12w(A^2)+\frac12 \|A\|^2}.
\end{eqnarray}
Further, Abu-Omar and Kittaneh \cite{Abu-Omar-Kittaneh} (see also \cite{Bhunia-LAA}) developed the upper bound
\begin{eqnarray}\label{A1}
    w^2(A) &\leq& \frac12 w(A^2)+\frac14 \| |A|^2+|A^*|^2\|.
\end{eqnarray}
Recently, Bhunia and Paul \cite{Bhunia-BSM} also established the upper bound 
\begin{eqnarray}\label{B1}
    w^2(A) &\leq& \frac12 w(|A||A^*|)+\frac14 \| |A|^2+|A^*|^2\|.
\end{eqnarray}
The above two bounds \eqref{A1} and \eqref{B1} are incomparable, in general. These both bounds refine the upper bound in \eqref{Kittaneh_Eq_1}. 
Further refinements and discussion on numerical radius and operator norm relations can be found in a long list of references, of which we refer \cite{Feki, BP, BHU, BKS, BHU-G, KMS, Kit23, SMS, SM}. We also refer the reader to the books \cite{Bhunia-book, Gustafson_Book_1997, Wu} for more systematic studies on this topic.

Our main aim in this work is to study further inequalities and relations involving the operator norm, numerical radius and Euclidean operator radius (an extension of the classical numerical radius). In particular, we obtain several improved lower and upper bounds for the numerical radius.

The definition of the numerical radius of a bounded linear operator suggests that the most straightforward approach to studying numerical radius inequalities is to investigate inner product inequalities. In this regard, the Cauchy-Schwarz inequality is considered a cornerstone, where for $x,y\in\mathcal{H}$, we have	
\begin{equation*}
\left\vert \left\langle x,y\right\rangle \right\vert \leq \left\Vert
x\right\Vert \left\Vert y\right\Vert .  \label{moo}
\end{equation*}
A strengthened version of the Cauchy-Schwarz inequality is known as the Buzano inequality, where we have the following lemma.

\begin{lemma} \cite{Buzano}\label{lem1} 
    If $x,y,e\in \mathcal{H}$ with $\|e\|=1,$ then
    $$|\langle x,e\rangle \langle e,y\rangle| \leq \frac12 \left(\|x\| \|y\|+ |\langle x,y\rangle|   \right) .$$
\end{lemma}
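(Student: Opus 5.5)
The plan is to derive Buzano's inequality from the Cauchy--Schwarz inequality applied to a suitably modified vector. The key device is the operator $U = 2P - I$, where $P$ is the orthogonal projection onto $\mathrm{span}\{e\}$, that is, $Px = \langle x,e\rangle e$. Since $P$ is a self-adjoint idempotent, $U$ is self-adjoint and $U^2 = 4P - 4P + I = I$. Hence $U$ is a unitary involution, and $\|Uy\| = \|y\|$ for every $y\in\mathcal{H}$.

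First, I will compute
\[
\langle x, Uy\rangle = 2\langle x, Py\rangle - \langle x,y\rangle = 2\,\overline{\langle y,e\rangle}\,\langle x,e\rangle - \langle x,y\rangle = 2\langle x,e\rangle\langle e,y\rangle - \langle x,y\rangle,
\]
which gives the identity
\[
2\langle x,e\rangle\langle e,y\rangle = \langle x,y\rangle + \langle x,Uy\rangle .
\]

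Second, I will apply the triangle inequality and then Cauchy--Schwarz to the second term:
\[
2|\langle x,e\rangle\langle e,y\rangle| \le |\langle x,y\rangle| + \|x\|\,\|Uy\| = |\langle x,y\rangle| + \|x\|\,\|y\|.
\]
Dividing by $2$ gives exactly the inequality in Lemma~\ref{lem1}.

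There is no serious obstacle here. The only point needing care is the conjugation bookkeeping in $\langle x, Py\rangle = \overline{\langle y,e\rangle}\langle x,e\rangle$, which uses linearity in the first slot and conjugate-linearity in the second. That computation, together with checking that $U$ is isometric (which follows from $U^*U = U^2 = I$), is the whole argument.
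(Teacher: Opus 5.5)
Your proof is correct and complete: since $\|e\|=1$, $P$ is a self-adjoint projection, so $U=2P-I$ is a self-adjoint unitary. The computation $\langle x,Py\rangle=\langle x,e\rangle\langle e,y\rangle$ is right, and the identity $2\langle x,e\rangle\langle e,y\rangle=\langle x,y\rangle+\langle x,Uy\rangle$, followed by the triangle and Cauchy--Schwarz inequalities, gives exactly the stated bound.

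The paper gives no proof of this lemma. It simply quotes it from Buzano, so there is no argument in the paper to compare yours against. Your reflection argument is the standard self-contained way to establish it.

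Your identity also yields, before the triangle inequality is applied, the slightly sharper form $\bigl|\langle x,e\rangle\langle e,y\rangle-\tfrac12\langle x,y\rangle\bigr|\le\tfrac12\|x\|\,\|y\|$. The stated inequality follows from this sharper form.
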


Another useful generalization of the Cauchy-Schwarz inequality is the mixed Schwarz inequality, which states that 
\begin{equation*}
\left\vert \left\langle Ax,y\right\rangle \right\vert ^{2}\leq \left\langle
\left\vert A\right\vert x,x\right\rangle \left\langle \left\vert A^{\ast
}\right\vert y,y\right\rangle ,  \label{moo2}
\end{equation*}
for every $x,y\in \mathcal{H}$.  One more generalization in this direction is the following lemma.

\begin{lemma}\label{lem2} \cite[Theorem 5]{Kittaneh88}
    Let $A,B\in \mathcal{B}(\mathcal{H})$ and $x,y\in \mathcal{H}.$ If $f,g$ are two continuous nonnegative functions on $[0,\infty)$ such that $f(t)g(t)=t$, for all $t\geq 0,$ then
      \[|\langle ABx,y\rangle|\leq r(B) \|f(|A|)x\|\|g(|A^*|)y\| \quad \text{for all } x, y\in \mathcal{H} \]
     where $r(B)$ denotes the spectral radius. In particular, for $B=I$,
    $$|\langle Ax,y\rangle|\leq \| f(|A|)x\|  \| g(|A^*|)y\|.$$
\end{lemma}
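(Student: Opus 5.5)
The $B=I$ case is proved directly from the polar decomposition; the general case needs an extra hypothesis on $B$. As written, with no assumption linking $A$ and $B$, the general inequality cannot hold. Taking $A=I$ and $f(t)=g(t)=\sqrt t$, it would read $|\langle Bx,y\rangle|\le r(B)\|x\|\|y\|$, which fails for any nonzero nilpotent $B$. In \cite{Kittaneh88} the statement carries the hypothesis $|A|B=B^*|A|$, and I would assume it for general $B$.

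\emph{The case $B=I$.} Write $A=U|A|$ with $U$ a partial isometry, so that $|A^*|=U|A|U^*$. Since $f(t)g(t)=t$, we have $|A|=g(|A|)f(|A|)$, and hence
\[
\langle Ax,y\rangle=\langle U g(|A|)f(|A|)x,y\rangle=\langle f(|A|)x,\,g(|A|)U^*y\rangle .
\]
Cauchy--Schwarz then gives $|\langle Ax,y\rangle|\le \|f(|A|)x\|\,\|g(|A|)U^*y\|$. It remains to show $\|g(|A|)U^*y\|^2=\langle Ug^2(|A|)U^*y,y\rangle\le \langle g^2(|A^*|)y,y\rangle$.

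By functional calculus for the unitary equivalence $U|A|U^*=|A^*|$ between $\overline{\operatorname{ran}}|A|$ and $\overline{\operatorname{ran}}|A^*|$, the two operators $Ug^2(|A|)U^*$ and $g^2(|A^*|)$ agree on $\overline{\operatorname{ran}}|A^*|$. On $\ker A^*$ the first is $0$, while the second is $g(0)^2\ge 0$. This yields the operator inequality $Ug^2(|A|)U^*\le g^2(|A^*|)$, and this kernel bookkeeping is the only delicate point in this case.

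\emph{General $B$ under $|A|B=B^*|A|$.} The same factorisation gives
\[
|\langle ABx,y\rangle|\le \|f(|A|)Bx\|\,\|g(|A^*|)y\| .
\]
So it suffices to prove $\|f(|A|)Bx\|\le r(B)\|f(|A|)x\|$. The hypothesis propagates to $h(|A|)B=B^*h(|A|)$ for continuous $h$, at least for $h=f^2$ when $f^2$ is approximable by polynomials on the spectrum. Then the sesquilinear form $[x,z]=\langle f^2(|A|)x,z\rangle$ is positive and $B$ is symmetric with respect to it.

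For such a symmetric operator, repeated use of Cauchy--Schwarz in this semi-inner product gives
\[
[Bx,Bx]=[B^2x,x]\le [B^2x,B^2x]^{1/2}[x,x]^{1/2} .
\]
Iterating yields $[Bx,Bx]\le [B^{2^n}x,B^{2^n}x]^{2^{-n}}[x,x]^{1-2^{-n}}$. Bounding $[B^{2^n}x,B^{2^n}x]\le\|f^2(|A|)\|\,\|B^{2^n}\|^2\|x\|^2$ and letting $n\to\infty$, Gelfand's formula gives $[Bx,Bx]\le r(B)^2[x,x]$, which is the required estimate.

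I expect the main obstacle to be justifying $f^2(|A|)B=B^*f^2(|A|)$ from $|A|B=B^*|A|$. This is immediate for polynomials in $|A|$, and then follows by Weierstrass approximation of the continuous function $f^2$ on $\sigma(|A|)$, using norm continuity of the functional calculus.
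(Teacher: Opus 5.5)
Your counterexample to the statement as printed ($A=I$, $f=g=\sqrt t$, $B\neq 0$ nilpotent) is right. Your $B=I$ argument is also correct; the kernel bookkeeping amounts to $Ug^2(|A|)U^*=g^2(|A^*|)-g(0)^2(I-UU^*)\le g^2(|A^*|)$. The paper gives no proof of its own here and only cites Kittaneh. The first step of your general case, $|\langle ABx,y\rangle|\le\|f(|A|)Bx\|\,\|g(|A^*|)y\|$, is fine as well.

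The gap is in the step you treated as routine. The relation $|A|B=B^*|A|$ does \emph{not} propagate to polynomials in $|A|$. One has $|A|^2B=|A|B^*|A|$ but $B^*|A|^2=|A|B|A|$. These agree only when $|A|$ commutes with the self-adjoint operator $|A|B$. What does propagate is $|A|B^n=(B^*)^n|A|$, i.e.\ powers of $B$, not of $|A|$.

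Concretely, take $A=\operatorname{diag}(1,2)$ and $B=\begin{pmatrix}0&1\\ \tfrac12&0\end{pmatrix}$. Then $|A|B=B^*|A|=\begin{pmatrix}0&1\\1&0\end{pmatrix}$ and $r(B)=1/\sqrt2$, since $B^2=\tfrac12 I$. Yet $|A|^2B=\begin{pmatrix}0&1\\2&0\end{pmatrix}\neq\begin{pmatrix}0&2\\1&0\end{pmatrix}=B^*|A|^2$. For $f(t)=t$, your reduced estimate $\|f(|A|)Bx\|\le r(B)\|f(|A|)x\|$ fails at $x=e_1$ ($1$ versus $1/\sqrt2$).

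No other argument can rescue this. With $f(t)=t$, $g\equiv 1$, $x=e_1$, $y=e_2$, the lemma itself would give $1=|\langle ABe_1,e_2\rangle|\le r(B)\|Ae_1\|\,\|e_2\|=1/\sqrt2$. So even with the hypothesis $|A|B=B^*|A|$ restored, the inequality for general $(f,g)$ is false.

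Your method does establish a correct special case. Run the iteration in the semi-inner product $[x,z]=\langle |A|x,z\rangle$, for which symmetry of $B$ is exactly the hypothesis $|A|B=B^*|A|$. This yields $\||A|^{1/2}Bx\|\le r(B)\||A|^{1/2}x\|$, hence $|\langle ABx,y\rangle|\le r(B)\||A|^{1/2}x\|\,\||A^*|^{1/2}y\|$. That is the case $f=g=\sqrt t$ (Reid--Halmos combined with mixed Schwarz), and it holds with equality in the example above. For other $(f,g)$, your two steps go through verbatim only under the $f$-dependent hypothesis $f^2(|A|)B=B^*f^2(|A|)$. That hypothesis reduces to $|A|B=B^*|A|$ when $f(t)=\sqrt t$.
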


In this paper, using these inner product inequalities, we establish several new numerical radius inequalities, which refine the previously related inequalities. 
Further, we develop several numerical radius inequalities via Euclidean operator radius and Euclidean operator norm of $2$-tuples of operators.
These also provides new refinements of the existing bounds.
We study equality conditions of the numerical radius inequalities. In addition, we obtain numerical radius inequalities for the commutators of operators, which refine and generalize the well known inequality $w(AB\pm BA) \leq 2\sqrt{2} w(A) \|B\|,$ given by Fong and Holbrook \cite{Fong}.


\section{Upper bounds for the numerical radius}
We begin this section with presenting the following inequalities involving two non-negative continuous functions, from which we derive several improved upper bounds for the numerical radius of bounded linear operators.


\begin{theorem}\label{th2}
     Let $A\in \mathcal{B}(\mathcal{H})$. If $f,g$ are two continuous nonnegative functions on $[0,\infty)$ such that $f(t)g(t)=t$, for all $t\geq 0,$ then
     \begin{eqnarray*}
         w^2(A) \leq \frac12 w\left(A \frac{f^2(|A|)+g^2(|A^*|)}{2} \right)+ \frac14 \left\| |A^*|^2+ \left(\frac{f^2(|A|)+g^2(|A^*|)}{2} \right)^2 \right\|
     \end{eqnarray*}
     and 
      \begin{eqnarray*}
         w^2(A) \leq \frac12 w\left( \frac{f^2(|A^*|)+g^2(|A|)}{2}A \right)+ \frac14 \left\| |A|^2+ \left(\frac{f^2(|A^*|)+g^2(|A|)}{2} \right)^2 \right\|.
     \end{eqnarray*}
\end{theorem}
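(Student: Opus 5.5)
Fix a unit vector $x\in\mathcal{H}$ and write $P=\frac{f^2(|A|)+g^2(|A^*|)}{2}$, a positive operator. For the first inequality we estimate $|\langle Ax,x\rangle|^2$ in two ways.

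\textbf{Step 1 (mixed Schwarz with $P$).} By Lemma \ref{lem2} with $B=I$, followed by the AM--GM inequality,
\begin{align*}
|\langle Ax,x\rangle| &\le \|f(|A|)x\|\,\|g(|A^*|)x\| = \langle f^2(|A|)x,x\rangle^{1/2}\langle g^2(|A^*|)x,x\rangle^{1/2}\\
&\le \tfrac12\bigl(\langle f^2(|A|)x,x\rangle+\langle g^2(|A^*|)x,x\rangle\bigr)=\langle Px,x\rangle .
\end{align*}
\textbf{Step 2 (Buzano).} Since $P$ is selfadjoint and $|\langle Ax,x\rangle|=|\langle x,A^*x\rangle|$, we get
\[
|\langle Ax,x\rangle|^2\le \langle Px,x\rangle\,|\langle x,A^*x\rangle| = |\langle Px,x\rangle\langle x,A^*x\rangle| .
\]
Apply Lemma \ref{lem1} with $e=x$, first vector $Px$ and second vector $A^*x$:
\[
|\langle Ax,x\rangle|^2\le \tfrac12\bigl(\|Px\|\,\|A^*x\|+|\langle Px,A^*x\rangle|\bigr).
\]
Here $\langle Px,A^*x\rangle=\langle APx,x\rangle$, so the last term is at most $w(AP)$.

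\textbf{Step 3 (AM--GM on the norm product).} We have
\[
\|Px\|\,\|A^*x\|\le \tfrac12\bigl(\|Px\|^2+\|A^*x\|^2\bigr)=\tfrac12\langle (P^2+|A^*|^2)x,x\rangle\le \tfrac12\bigl\||A^*|^2+P^2\bigr\|,
\]
using $\|A^*x\|^2=\langle AA^*x,x\rangle=\langle |A^*|^2x,x\rangle$. Combining the three steps and taking the supremum over unit $x$ gives the first inequality.

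\textbf{Second inequality.} We can either repeat the argument, or apply the first inequality to $A^*$. Since $|(A^*)^*|=|A|$, $|A^*|$ takes the place of $|A|$, and $P$ becomes $Q=\frac{f^2(|A^*|)+g^2(|A|)}{2}$. This yields
\[
w^2(A^*)\le \tfrac12 w(A^*Q)+\tfrac14\bigl\||A|^2+Q^2\bigr\|.
\]
Then use $w(A^*)=w(A)$ and $w(A^*Q)=w((QA)^*)=w(QA)$, the latter because $Q$ is selfadjoint.

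\textbf{Expected obstacle.} The only delicate point is setting up Buzano's inequality so that the cross term becomes $\langle APx,x\rangle$ in that order, rather than $PA$. This is what makes the first bound carry $|A^*|^2$ and the second carry $|A|^2$. The second subtlety is checking that Lemma \ref{lem2} applies as used; it requires only nonnegativity of $f,g$ together with $f(t)g(t)=t$.
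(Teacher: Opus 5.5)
Your proof is correct and follows the paper's route. You use mixed Schwarz plus AM--GM to get $|\langle Ax,x\rangle|\le\langle Px,x\rangle$, then Buzano with $e=x$ and cross term $\langle APx,x\rangle$, then AM--GM on $\|Px\|\,\|A^*x\|$, and finally replace $A$ by $A^*$. Your explicit observation that $w(A^*Q)=w\bigl((QA)^*\bigr)=w(QA)$, because $Q$ is selfadjoint, spells out a step the paper leaves implicit.
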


\begin{proof}
    Take $x\in \mathcal{H}$ with $\|x\|=1.$ From Lemma \ref{lem2} and by the AM-GM inequality, we have
    \begin{eqnarray*}
        |\langle Ax,x\rangle|&\leq& \langle f^2(|A|)x,x\rangle^{1/2} \langle g^2(|A^*|)x,x\rangle^{1/2}
        \leq \left \langle  \frac{f^2(|A|)+g^2(|A^*|)}{2}x,x \right\rangle.
    \end{eqnarray*}
    Therefore, using Lemma \ref{lem1}, we obtain
     \begin{eqnarray*}
        |\langle Ax,x\rangle|^2 &\leq&  \left \langle  \frac{f^2(|A|)+g^2(|A^*|)}{2}x,x \right\rangle |\langle x,A^*x\rangle| \\
        & \leq&  \frac{ \left|\left \langle A \frac{f^2(|A|)+g^2(|A^*|)}{2}x,x \right\rangle \right |+ \|A^*x\| \left\|  \frac{f^2(|A|)+g^2(|A^*|)}{2}x\right\| }{2}\\
         & \leq&  \frac{ \left|\left \langle A \frac{f^2(|A|)+g^2(|A^*|)}{2}x,x \right\rangle \right|+\frac{ \|A^*x\|^2+ \left\|  \frac{f^2(|A|)+g^2(|A^*|)}{2}x\right\|^2}{2} }{2}\\
         && \quad (\text{ by the AM-GM inequality})\\
         &\leq& \frac12 w\left(A \frac{f^2(|A|)+g^2(|A^*|)}{2} \right)+ \frac14 \left\| |A^*|^2+ \left(\frac{f^2(|A|)+g^2(|A^*|)}{2} \right)^2 \right\|.
    \end{eqnarray*}
    Taking the supremum over $\|x\|=1,$ we obtain
     \begin{eqnarray*}
         w^2(A) \leq \frac12 w\left(A \frac{f^2(|A|)+g^2(|A^*|)}{2} \right)+ \frac14 \left\| |A^*|^2+ \left(\frac{f^2(|A|)+g^2(|A^*|)}{2} \right)^2 \right\|.
     \end{eqnarray*}
     Replacing $A$ by $A^*$, we get
      \begin{eqnarray*}
         w^2(A) \leq \frac12 w\left( \frac{f^2(|A^*|)+g^2(|A|)}{2}A \right)+ \frac14 \left\| |A|^2+ \left(\frac{f^2(|A^*|)+g^2(|A|)}{2} \right)^2 \right\|,
     \end{eqnarray*}
     as desired.
\end{proof}

Setting $f(t)=t^{\alpha}$ and $g(t)=t^{1-\alpha}$,  $\alpha \in [0,1]$ in Theorem \ref{th2}, we get the following corollary.

\begin{cor}\label{cor3}
 Let $A\in \mathcal{B}(\mathcal{H})$ and $\alpha \in [0,1]$. Then
     \begin{eqnarray*}
         w^2(A) \leq \frac12 w\left(A \frac{|A|^{2\alpha}+|A^*|^{2(1-\alpha)}}{2} \right)+ \frac14 \left\| |A^*|^2+ \left(\frac{|A|^{2\alpha}+|A^*|^{2(1-\alpha)}}{2} \right)^2 \right\|
     \end{eqnarray*}
     and 
       \begin{eqnarray*}
         w^2(A) \leq \frac12 w\left( \frac{|A^*|^{2\alpha}+|A|^{2(1-\alpha)}}{2}A \right)+ \frac14 \left\| |A|^2+ \left(\frac{|A^*|^{2\alpha}+|A|^{2(1-\alpha)}}{2} \right)^2 \right\|.
     \end{eqnarray*}
\end{cor}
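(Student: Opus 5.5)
The corollary is a direct specialization of Theorem \ref{th2}, so the plan is simply to choose the functions appropriately and check the hypotheses. For fixed $\alpha\in[0,1]$, set $f(t)=t^{\alpha}$ and $g(t)=t^{1-\alpha}$ on $[0,\infty)$, with the convention $t^0=1$ (so at the endpoints $\alpha=0$ or $\alpha=1$ one of the functions is the constant $1$). Both are continuous and nonnegative on $[0,\infty)$, and $f(t)g(t)=t^{\alpha}t^{1-\alpha}=t$ for all $t\ge 0$, including $t=0$ under this convention. Thus Theorem \ref{th2} applies.

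Next we identify the operators appearing in Theorem \ref{th2} via the continuous functional calculus for the positive operators $|A|$ and $|A^*|$. Since $f^2(t)=t^{2\alpha}$ and $g^2(t)=t^{2(1-\alpha)}$, we get $f^2(|A|)=|A|^{2\alpha}$ and $g^2(|A^*|)=|A^*|^{2(1-\alpha)}$. Hence
\[
\frac{f^2(|A|)+g^2(|A^*|)}{2}=\frac{|A|^{2\alpha}+|A^*|^{2(1-\alpha)}}{2}.
\]
In the same way,
\[
\frac{f^2(|A^*|)+g^2(|A|)}{2}=\frac{|A^*|^{2\alpha}+|A|^{2(1-\alpha)}}{2}.
\]
Substituting these expressions into the two inequalities of Theorem \ref{th2} gives exactly the two inequalities of the corollary.

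No step here is a genuine obstacle. The only point requiring care is the endpoint convention $T^0=I$ for a positive operator $T$ when $\alpha\in\{0,1\}$. This is consistent with the functional calculus applied to the constant function $1$, even when $|A|$ or $|A^*|$ is not invertible.
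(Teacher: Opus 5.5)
Your proposal is correct and matches the paper's argument: the paper likewise obtains the corollary by substituting $f(t)=t^{\alpha}$ and $g(t)=t^{1-\alpha}$ into Theorem \ref{th2} and reading off $f^2(|A|)=|A|^{2\alpha}$, $g^2(|A^*|)=|A^*|^{2(1-\alpha)}$ (and the analogous identities for the second inequality). Your remark on the endpoint convention $t^0=1$, which keeps $f(t)g(t)=t$ at $t=0$ when $\alpha\in\{0,1\}$, is a detail the paper leaves implicit.
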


  In particular, setting $\alpha=\frac12$ in Corollary \ref{cor3}, we derive the following bounds.

  \begin{cor}\label{cor4}
 Let $A\in \mathcal{B}(\mathcal{H})$. Then
     \begin{eqnarray*}
         w^2(A) &\leq & \frac12 w\left(A \frac{|A|^{}+|A^*|^{}}{2} \right)+ \frac14 \left\| |A^*|^2+ \left(\frac{|A|^{}+|A^*|^{}}{2} \right)^2 \right\|
     \end{eqnarray*}
     and 
       \begin{eqnarray*}
         w^2(A) & \leq & \frac12 w\left( \frac{|A|^{}+|A^*|^{}}{2}A \right)+ \frac14 \left\| |A|^2+ \left(\frac{|A|^{}+|A^*|^{}}{2} \right)^2 \right\|.
     \end{eqnarray*}
\end{cor}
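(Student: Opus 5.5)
This is a direct specialization of Corollary \ref{cor3} at $\alpha=\frac12$. No new inner product estimate is needed; the plan is to substitute and then check that the operators match.

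With $\alpha=\frac12$, the first inequality of Corollary \ref{cor3} involves $|A|^{2\alpha}=|A|$ and $|A^*|^{2(1-\alpha)}=|A^*|$. Hence the averaged operator becomes $\frac{|A|+|A^*|}{2}$, and the first displayed bound follows verbatim. In the second inequality of Corollary \ref{cor3}, the averaged operator is $\frac{|A^*|^{2\alpha}+|A|^{2(1-\alpha)}}{2}$. At $\alpha=\frac12$ this equals $\frac{|A^*|+|A|}{2}=\frac{|A|+|A^*|}{2}$, which gives the second bound.

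Equivalently, one can go back to Theorem \ref{th2} with $f(t)=g(t)=t^{1/2}$. These are continuous and nonnegative on $[0,\infty)$ and satisfy $f(t)g(t)=t$. By the continuous functional calculus for the positive operators $|A|$ and $|A^*|$, we get $f^2(|A|)=|A|$ and $g^2(|A^*|)=|A^*|$, and likewise $f^2(|A^*|)=|A^*|$ and $g^2(|A|)=|A|$. So both operators appearing in Theorem \ref{th2} reduce to $\frac{|A|+|A^*|}{2}$.

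There is no real obstacle. The only point needing care is that $t\mapsto t^{1/2}$ is admissible in Lemma \ref{lem2}, which holds since it is continuous and nonnegative. One must also check that the functional calculus identity $(|A|^{1/2})^2=|A|$ is used for the positive operator $|A|$ (and similarly for $|A^*|$), which is standard.
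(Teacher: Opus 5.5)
Your proposal is correct and matches the paper's proof. The paper derives this corollary simply by setting $\alpha=\frac12$ in Corollary~\ref{cor3} (equivalently, $f(t)=g(t)=t^{1/2}$ in Theorem~\ref{th2}), and your check that both averaged operators reduce to $\frac{|A|+|A^*|}{2}$ is all that is required.
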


\begin{remark}
(i) The inequalities in Corollary \ref{cor4} look like \eqref{A1} and \eqref{B1}, however not the same. To support our bounds, we consider an example.
 Take a matrix $A=\begin{pmatrix}
     0&1&0&0\\
     0&0&0&1\\
     0&0&0&0\\
     0&0&1&0
 \end{pmatrix}.$ We see that 
 \begin{eqnarray*}
         \frac12 w\left(A \frac{|A|^{}+|A^*|^{}}{2} \right)+ \frac14 \left\| |A^*|^2+ \left(\frac{|A|^{}+|A^*|^{}}{2} \right)^2 \right\| &=& \frac{\sqrt{5}+\sqrt{13}}{16}+\frac12\approx 0.86510120331,
     \end{eqnarray*}
whereas 
 $$\frac12 w(|A||A^*|)+\frac14 \left\| |A|^2+|A^*|^2\right\|=1.$$
This shows that there are operators for which Corollary \ref{cor4} gives better bound than \eqref{B1}.

 (i) Clearly the bounds in Corollary \ref{cor4} always give stronger estimates than the classical upper estimate in \eqref{Eq_Intro_Equiv}.
\end{remark}

  Using the definition of numerical radius and Cauchy-Schwarz inequality, we check that 
  $$w\left(A \frac{|A|^{}+|A^*|^{}}{2} \right)\leq \frac12 \left\| |A^*|^2+ \left(\frac{|A|^{}+|A^*|^{}}{2} \right)^2 \right\|$$
    and 
    $$w\left(\frac{|A|^{}+|A^*|^{}}{2}A \right)\leq \frac12 \left\| |A|^2+ \left(\frac{|A|^{}+|A^*|^{}}{2} \right)^2 \right\|.$$
    
    Using these inequalities in Corollary \ref{cor4}, we deduce the following bound, which improves the classical upper bound in \eqref{Eq_Intro_Equiv}. 
    
    \begin{cor}\label{pintu}
        If $A\in \mathcal{B}(\mathcal{H})$, then
\begin{eqnarray*}
         w^2(A) \leq \frac12 \min \left\{  \left\| |A|^2+ \left(\frac{|A|^{}+|A^*|^{}}{2} \right)^2 \right\|, \left\| |A^*|^2+ \left(\frac{|A|^{}+|A^*|^{}}{2} \right)^2 \right\| \right\}.
     \end{eqnarray*}
\end{cor}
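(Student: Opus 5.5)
Write $T=\frac{|A|+|A^*|}{2}$, which is positive. The plan is to take the two bounds of Corollary \ref{cor4} and, in each, control the numerical radius term $w(AT)$ (resp. $w(TA)$) by half of the norm term that already appears beside it.

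\textbf{Step 1: bound $w(TA)$.} Fix a unit vector $x$. Since $T$ is selfadjoint, $\langle TAx,x\rangle=\langle Ax,Tx\rangle$. By Cauchy--Schwarz and then the AM--GM inequality,
\[
|\langle TAx,x\rangle|\le \|Ax\|\,\|Tx\| \le \tfrac12\left(\|Ax\|^2+\|Tx\|^2\right)=\tfrac12\left\langle \left(|A|^2+T^2\right)x,x\right\rangle .
\]
The right-hand side is at most $\frac12\left\||A|^2+T^2\right\|$. Taking the supremum over unit $x$ gives
\[
w(TA)\le \tfrac12\left\||A|^2+T^2\right\|.
\]

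\textbf{Step 2: bound $w(AT)$.} Here I write $\langle ATx,x\rangle=\langle Tx,A^*x\rangle$. The same argument gives
\[
|\langle ATx,x\rangle|\le \tfrac12\left(\|A^*x\|^2+\|Tx\|^2\right)=\tfrac12\left\langle \left(|A^*|^2+T^2\right)x,x\right\rangle,
\]
where I use $\|A^*x\|^2=\langle AA^*x,x\rangle=\langle |A^*|^2x,x\rangle$. Hence
\[
w(AT)\le \tfrac12\left\||A^*|^2+T^2\right\|.
\]

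\textbf{Step 3: combine.} Substituting these into the two inequalities of Corollary \ref{cor4} gives
\[
w^2(A)\le \tfrac14\left\||A^*|^2+T^2\right\|+\tfrac14\left\||A^*|^2+T^2\right\|=\tfrac12\left\||A^*|^2+T^2\right\|
\]
from the first bound, and similarly $w^2(A)\le \frac12\left\||A|^2+T^2\right\|$ from the second. Taking the smaller of the two yields the stated minimum.

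There is no real obstacle; the only point needing care is matching each inequality to the right pair of vectors. For $TA$ the pairing is $(Ax,Tx)$, which produces $|A|^2$. For $AT$ the pairing is $(Tx,A^*x)$, which produces $|A^*|^2$. These match the norm terms paired with $w(TA)$ and $w(AT)$ in Corollary \ref{cor4}.
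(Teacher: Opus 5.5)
Your proof is correct and is essentially the paper's argument: the paper likewise bounds $w\left(A\frac{|A|+|A^*|}{2}\right)\leq \frac12\left\| |A^*|^2+\left(\frac{|A|+|A^*|}{2}\right)^2\right\|$ and $w\left(\frac{|A|+|A^*|}{2}A\right)\leq \frac12\left\| |A|^2+\left(\frac{|A|+|A^*|}{2}\right)^2\right\|$ by Cauchy--Schwarz, with the AM--GM step left implicit. It then substitutes these bounds into the two inequalities of Corollary~\ref{cor4}, with the same pairing of $|A^*|^2$ and $|A|^2$ to the two bounds that you used.
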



Now, from the proof of Theorem \ref{th2}, we can also obtain following numerical radius inequalities, from which we deduce improved bounds. 

\begin{theorem}\label{th3}
      Let $A\in \mathcal{B}(\mathcal{H})$. If $f,g$ are two continuous nonnegative functions on $[0,\infty)$ such that $f(t)g(t)=t$, for all $t\geq 0,$ then
     \begin{eqnarray*}
         w^2(A) \leq \frac12 w\left(A \frac{f^2(|A|)+g^2(|A^*|)}{2} \right)+ \frac12 \|A\| \left\| \frac{f^2(|A|)+g^2(|A^*|)}{2}  \right\|
     \end{eqnarray*}
     and 
      \begin{eqnarray*}
         w^2(A) \leq \frac12 w\left( \frac{f^2(|A^*|)+g^2(|A|)}{2}A \right)+ \frac12 \|A\| \left\|\frac{f^2(|A^*|)+g^2(|A|)}{2} \right\|.
     \end{eqnarray*}
\end{theorem}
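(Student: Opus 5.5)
The plan is to repeat the argument of Theorem \ref{th2} up to the use of Buzano's inequality, and then bound the remaining product term by norms instead of applying the AM-GM inequality. Set $T=\frac{f^2(|A|)+g^2(|A^*|)}{2}$, which is a positive operator, and fix a unit vector $x\in\mathcal{H}$. By Lemma \ref{lem2} with $B=I$, followed by the AM-GM inequality, we have
$$|\langle Ax,x\rangle|\leq \langle f^2(|A|)x,x\rangle^{1/2}\langle g^2(|A^*|)x,x\rangle^{1/2}\leq \langle Tx,x\rangle .$$
Multiplying this by $|\langle Ax,x\rangle|=|\langle x,A^*x\rangle|$ gives
$$|\langle Ax,x\rangle|^2\leq \langle Tx,x\rangle\,|\langle x,A^*x\rangle| .$$

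Next we apply Lemma \ref{lem1} with $e=x$, taking $Tx$ and $A^*x$ as the two vectors. Since $T$ is self-adjoint, $\langle Tx,x\rangle=\langle x,Tx\rangle$. Buzano's inequality then gives
$$|\langle Ax,x\rangle|^2\leq \tfrac12\left(|\langle Tx,A^*x\rangle|+\|Tx\|\,\|A^*x\|\right)=\tfrac12\left(|\langle ATx,x\rangle|+\|Tx\|\,\|A^*x\|\right).$$
Here Theorem \ref{th2} used AM-GM on $\|Tx\|\,\|A^*x\|$. Instead, we bound this term directly by $\|T\|\,\|A^*\|=\|T\|\,\|A\|$. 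The first term is bounded by $w(AT)$. Taking the supremum over unit vectors $x$ yields the first inequality.

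For the second inequality, we apply the first one to $A^*$. This uses $w(A^*)=w(A)$, $\|A^*\|=\|A\|$ and $|(A^*)^*|=|A|$. The operator appearing becomes $S=\frac{f^2(|A^*|)+g^2(|A|)}{2}$, and we get $w^2(A)\leq \frac12 w(A^*S)+\frac12\|A\|\|S\|$. Finally, $w(A^*S)=w((SA)^*)=w(SA)$.

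I expect no real obstacle. The only points to check are that the vectors in Buzano's inequality are ordered so that the inner product is $\langle ATx,x\rangle$ and not $\langle TAx,x\rangle$, and the adjoint step at the end.
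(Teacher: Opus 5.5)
Your proof is correct and follows the paper's own route. The paper obtains Theorem \ref{th3} from the proof of Theorem \ref{th2}: it stops after Buzano's inequality, bounds $\|Tx\|\,\|A^*x\|$ by $\|T\|\,\|A\|$ instead of applying AM-GM, and then passes to $A^*$ for the second inequality, exactly as you do.
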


Setting $f(t)=t^{\alpha}$ and $g(t)=t^{1-\alpha}$, $\alpha \in [0,1]$ in Theorem \ref{th3}, we get the following corollary.

\begin{cor}\label{cor5}
 Let $A\in \mathcal{B}(\mathcal{H})$ and $\alpha \in [0,1]$. Then
     \begin{eqnarray*}
         w^2(A) \leq \frac12 w\left(A \frac{|A|^{2\alpha}+|A^*|^{2(1-\alpha)}}{2} \right)+ \frac12 \|A\| \left\|  \frac{|A|^{2\alpha}+|A^*|^{2(1-\alpha)}}{2}  \right\|
     \end{eqnarray*}
     and 
       \begin{eqnarray*}
         w^2(A) \leq \frac12 w\left( \frac{|A^*|^{2\alpha}+|A|^{2(1-\alpha)}}{2}A \right)+ \frac12 \|A\| \left\|  \frac{|A^*|^{2\alpha}+|A|^{2(1-\alpha)}}{2}  \right\|.
     \end{eqnarray*}
\end{cor}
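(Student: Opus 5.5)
Corollary \ref{cor5} is the special case $f(t)=t^{\alpha}$, $g(t)=t^{1-\alpha}$ of Theorem \ref{th3}, so the plan is to check this substitution. Both functions are continuous and nonnegative on $[0,\infty)$ for $\alpha\in[0,1]$, and $f(t)g(t)=t$ for all $t\geq 0$. At the endpoints $\alpha=0$ or $\alpha=1$ we read $t^0$ as the constant function $1$, which is still continuous and nonnegative with the product rule intact. Functional calculus for the positive operators $|A|$ and $|A^*|$ then gives
\[
f^2(|A|)=|A|^{2\alpha},\qquad g^2(|A^*|)=|A^*|^{2(1-\alpha)} .
\]
Substituting these into the first inequality of Theorem \ref{th3} yields the first inequality of the corollary. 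Likewise $f^2(|A^*|)=|A^*|^{2\alpha}$ and $g^2(|A|)=|A|^{2(1-\alpha)}$ turn the second inequality of Theorem \ref{th3} into the second inequality of the corollary.

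The corollary is only as good as Theorem \ref{th3}, whose proof the paper has not written out. It would follow the proof of Theorem \ref{th2}. Fix a unit vector $x$ and put $T=\frac{f^2(|A|)+g^2(|A^*|)}{2}$. Lemma \ref{lem2} together with the AM-GM inequality gives $|\langle Ax,x\rangle|\leq\langle Tx,x\rangle$. Then
\[
|\langle Ax,x\rangle|^2\leq \langle Tx,x\rangle\,|\langle x,A^*x\rangle| .
\]
Buzano's inequality (Lemma \ref{lem1}), applied with the vectors $Tx$ and $A^*x$ and the unit vector $e=x$, bounds the right side by
\[
\tfrac12\bigl(|\langle ATx,x\rangle|+\|Tx\|\,\|A^*x\|\bigr).
\]
Here we stop before the final AM-GM step used in Theorem \ref{th2} and simply use $\|Tx\|\,\|A^*x\|\leq\|T\|\,\|A\|$. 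Taking the supremum over unit vectors $x$ gives the first inequality of Theorem \ref{th3}. Applying this to $A^*$, and using $w(A^*)=w(A)$, $\|A^*\|=\|A\|$ and $w(X^*)=w(X)$ with $(A^*T')^*=T'A$, gives the second.

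No step is a real obstacle. The only point needing care is the endpoint case $\alpha\in\{0,1\}$, where the convention $t^0\equiv 1$ must be used so that $f$ or $g$ is continuous at $0$.
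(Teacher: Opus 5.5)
Your proposal is correct and takes the same route as the paper. The corollary comes from substituting $f(t)=t^{\alpha}$, $g(t)=t^{1-\alpha}$ into Theorem~\ref{th3}. Your reconstruction of Theorem~\ref{th3} (run the proof of Theorem~\ref{th2}, replace its final AM-GM step by $\|Tx\|\,\|A^*x\|\leq\|T\|\,\|A\|$, then pass to $A^*$) is exactly what the paper's phrase ``from the proof of Theorem~\ref{th2}'' intends, and your endpoint convention $t^0\equiv 1$ is a harmless clarification the paper leaves implicit.
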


In particular,
taking $\alpha=\frac12$ in Corollary \ref{cor5}, we deduce the following refined bound.

  \begin{cor}\label{cor6}
 Let $A\in \mathcal{B}(\mathcal{H})$. Then
     \begin{eqnarray*}
         w^2(A) \leq \frac12 \min \left\{ w\left(A \frac{|A|^{}+|A^*|^{}}{2} \right),  w\left( \frac{|A|^{}+|A^*|^{}}{2}A \right) \right\}+ \frac12 \|A\| \left\|  \frac{|A|^{}+|A^*|^{}}{2} \right\|.
     \end{eqnarray*}
\end{cor}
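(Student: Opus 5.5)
The plan is to combine the two inequalities of Corollary \ref{cor5} at $\alpha=\tfrac12$, i.e.\ the two cases $f(t)=g(t)=\sqrt t$ of Theorem \ref{th3}, and then take the better of the two.

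Step 1 is the first inequality. With $f(t)=g(t)=t^{1/2}$ we get $f^2(|A|)=|A|$ and $g^2(|A^*|)=|A^*|$. The first inequality of Corollary \ref{cor5} then reads
$$w^2(A)\le \frac12 w\left(A\frac{|A|+|A^*|}{2}\right)+\frac12\|A\|\left\|\frac{|A|+|A^*|}{2}\right\|.$$

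Step 2 is the second inequality. Again with $\alpha=\tfrac12$, we have $f^2(|A^*|)+g^2(|A|)=|A^*|+|A|$. This is the same positive operator as in Step 1, so the second inequality becomes
$$w^2(A)\le \frac12 w\left(\frac{|A|+|A^*|}{2}A\right)+\frac12\|A\|\left\|\frac{|A|+|A^*|}{2}\right\|.$$
The second term is identical in both bounds.

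Step 3 combines them. Both bounds hold simultaneously and share the term $\frac12\|A\|\,\|\frac{|A|+|A^*|}{2}\|$. Therefore $w^2(A)$ is at most the smaller of the two right-hand sides, which is exactly $\frac12\min\{\cdot,\cdot\}$ plus the common term, as stated in Corollary \ref{cor6}.

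There is no real obstacle. The only point needing care is that Theorem \ref{th3} is stated without a written proof. If one wants it justified, the argument is as follows. In the proof of Theorem \ref{th2}, stop just before the final AM--GM step, at the bound
$$|\langle Ax,x\rangle|^2\le \tfrac12\,|\langle ATx,x\rangle|+\tfrac12\,\|A^*x\|\,\|Tx\|,\qquad T=\tfrac{f^2(|A|)+g^2(|A^*|)}{2}.$$
Then bound $\|A^*x\|\le\|A\|$ and $\|Tx\|\le\|T\|$, and take the supremum over unit vectors $x$. The second inequality of Theorem \ref{th3} follows by applying the first to $A^*$. This uses $w(B^*)=w(B)$ and $\|A^*\|=\|A\|$, together with $(A^*T')^*=T'A$ for the positive operator $T'=\frac{f^2(|A^*|)+g^2(|A|)}{2}$.
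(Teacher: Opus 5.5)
Your proposal is correct and follows the paper's own route: Corollary \ref{cor6} comes from taking $\alpha=\frac12$ in both inequalities of Corollary \ref{cor5} (i.e.\ $f(t)=g(t)=\sqrt{t}$ in Theorem \ref{th3}), noting that both involve the same operator $\frac{|A|+|A^*|}{2}$ and the same second term, and then taking the minimum. Your justification of Theorem \ref{th3} is what the paper means by ``from the proof of Theorem \ref{th2}'': you stop right after the Buzano step, bound $\|A^*x\|\,\|Tx\|\le\|A\|\,\|T\|$, and pass to $A^*$ using $w(B^*)=w(B)$.
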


We observe that Corollary \ref{cor6} and Corollary \ref{cor4} give different numerical radius bounds, and these bounds are not comparable, in general. To see this, consider the matrices $\begin{pmatrix}
    0&1&0\\
    0&0&0\\
    0&0&0
\end{pmatrix}$ and $\begin{pmatrix}
    0&2&0\\
    0&0&3\\
    0&0&0
\end{pmatrix}$.

\begin{remark} (i) Clearly, we have 
\begin{eqnarray*}
         w^2(A) &\leq & \frac12 \min \left\{ w\left(A \frac{|A|^{}+|A^*|^{}}{2} \right),  w\left( \frac{|A|^{}+|A^*|^{}}{2}A \right) \right\}+ \frac12 \|A\| \left\|  \frac{|A|^{}+|A^*|^{}}{2} \right\|\\
         &\leq& \frac12 \min \left\{ \left\| A \frac{|A|^{}+|A^*|^{}}{2} \right\|,  \left\| \frac{|A|^{}+|A^*|^{}}{2}A \right\| \right\}+ \frac12 \|A\| \left\|  \frac{|A|^{}+|A^*|^{}}{2} \right\|\\
         &\leq& \|A\| \left\|  \frac{|A|^{}+|A^*|^{}}{2} \right\| 
         \leq \|A\|^2.
     \end{eqnarray*}
     
Thus, Corollary \ref{cor6} refines the bound $w^2(A)\leq \|A\| \left\|  \frac{|A|^{}+|A^*|^{}}{2} \right\|,$ which follows by combining the existing bounds $w(A)\leq \|A\|$ and $w(A)\leq \left\|  \frac{|A|^{}+|A^*|^{}}{2} \right\|,$ given in \cite{Kittaneh1} (see also \cite{BHU}).

(ii) We consider a simple example to show that there are operators for which Corollary \ref{cor6} gives a stronger bound than the well known bound \eqref{EQNDRA},
given by Dragomir \cite{DRAG}. Take $A=\begin{pmatrix}
    0&1\\
    0&0
\end{pmatrix}$. We see that 
\begin{eqnarray*}
    && \frac38=\frac12 \min \left\{ w\left(A \frac{|A|^{}+|A^*|^{}}{2} \right),  w\left( \frac{|A|^{}+|A^*|^{}}{2}A \right) \right\}+ \frac12 \|A\| \left\|  \frac{|A|^{}+|A^*|^{}}{2} \right\|\\
    &<& \frac12=\frac12w(A^2)+\frac12 \|A\|^2.
\end{eqnarray*}


(iii) From the inequalities in (i), we deduce that if $w(A)=\|A\|$ (e.g. if $A$ is normal), then $$ w\left(A \frac{|A|^{}+|A^*|^{}}{2} \right)=  w\left( \frac{|A|^{}+|A^*|^{}}{2}A \right) = w(A|A|)=w(A|A^*|)=w(|A|A)=w(|A^*|A)=\|A\|^2.$$
\end{remark}

\subsection{Bounds via Euclidean operator radius}
Suppose $A,B\in\mathcal{B}(\mathcal{H})$.
Recall that the Euclidean operator norm and Euclidean operator radius of a $2$-tuple $(A,B)$ are defined, respectively, as (see \cite{Popescu_MAMS_2009}) 
$$\|A,B\|_e=\sup \left\{ \sqrt{|\left<Ax,y\right>|^2+|\left<Bx,y\right>|^2}: {\|x\|=\|y\|=1} \right\}$$
and 
$$w_e(A,B)=\sup \left\{ \sqrt{|\left<Ax,x\right>|^2+|\left<Bx,x\right>|^2} : {\|x\|=1} \right\}.$$

One more generalization, called the $p$-numerical radius of a $2$-tuple $(A,B)$, is defined as  (see \cite{MOS}) $$w_p(A,B)=\sup \left\{ \left(|\left<Ax,x\right>|^p+|\left<Bx,x\right>|^p\right)^{1/p} : {\|x\|=1} \right\},$$
where $p\geq 1.$

In this subsection, we obtain numerical radius inequalities for the sums and products of operators involving the Euclidean operator norm and Euclidean operator radius.
Using the notion of Euclidean operator norm, first we obtain the following theorem.

\begin{theorem}\label{th1}
    If $T\in \mathcal{B}(\mathcal{H}),$ then
    $$w^2(A)\leq \frac14 w(A^2)+ \frac14 \|A, A^*\|_e^2+ \frac18 \|A^*A+AA^*\|.$$
\end{theorem}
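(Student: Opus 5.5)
Throughout I read the $T$ in the statement as $A$, since $T$ does not appear in the inequality. The plan is to split $|\langle Ax,x\rangle|^2$ into two equal halves for a unit vector $x$. One half is controlled by Buzano's inequality (Lemma \ref{lem1}), which produces the $w(A^2)$ and $\|A^*A+AA^*\|$ terms. The other half is written symmetrically in $A$ and $A^*$, which produces the Euclidean operator norm term.

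\emph{Splitting.} Fix $x\in\mathcal H$ with $\|x\|=1$. Since $|\langle A^*x,x\rangle|=|\langle Ax,x\rangle|$, we can write
\[
|\langle Ax,x\rangle|^2=\tfrac12|\langle Ax,x\rangle|^2+\tfrac14\left(|\langle Ax,x\rangle|^2+|\langle A^*x,x\rangle|^2\right).
\]

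\emph{Second term.} Taking $y=x$ in the definition of $\|\cdot,\cdot\|_e$ gives
\[
|\langle Ax,x\rangle|^2+|\langle A^*x,x\rangle|^2\le \|A,A^*\|_e^2 .
\]
So the second term is at most $\frac14\|A,A^*\|_e^2$.

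\emph{First term.} Write $|\langle Ax,x\rangle|^2=|\langle Ax,x\rangle\langle x,A^*x\rangle|$. Apply Lemma \ref{lem1} with vectors $Ax$, $A^*x$ and unit vector $e=x$. This gives
\[
|\langle Ax,x\rangle|^2\le \tfrac12\left(\|Ax\|\|A^*x\|+|\langle Ax,A^*x\rangle|\right)=\tfrac12\left(\|Ax\|\|A^*x\|+|\langle A^2x,x\rangle|\right).
\]
Next use the AM-GM inequality $\|Ax\|\|A^*x\|\le \frac12(\|Ax\|^2+\|A^*x\|^2)=\frac12\langle (A^*A+AA^*)x,x\rangle$. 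Combining,
\[
\tfrac12|\langle Ax,x\rangle|^2\le \tfrac14 w(A^2)+\tfrac18\|A^*A+AA^*\|.
\]

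\emph{Conclusion.} Adding the two estimates bounds $|\langle Ax,x\rangle|^2$ by the right-hand side of the theorem. Taking the supremum over unit vectors $x$ yields the claim.

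The only non-routine step is choosing the split: the weights $\frac12$ and $\frac14+\frac14$ are exactly what make the Buzano half produce the coefficients $\frac14$ and $\frac18$. After that, each piece is a direct application of Lemma \ref{lem1} or of the definition of $\|\cdot,\cdot\|_e$.
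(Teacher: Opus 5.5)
Your proof is correct, and it is a leaner route than the paper's. The paper works with two independent unit vectors $x,y$. It expands $|\langle Re(A)x,y\rangle|^2\le\frac14(|\langle Ax,y\rangle|^2+|\langle A^*x,y\rangle|^2+2|\langle Ax,y\rangle||\langle A^*x,y\rangle|)$ and applies Lemma~\ref{lem1} with $e=y$ to the cross term. This gives the bound for $\|Re(A)\|^2$. It then recovers $w^2(A)$ from $w(A)=\sup_{\theta}\|Re(e^{i\theta}A)\|$, after noting that the right-hand side is invariant under $A\mapsto e^{i\theta}A$. The Euclidean operator norm enters naturally in that route because $y$ is free.

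Your split is exactly the right-hand side of that expansion evaluated at $y=x$, where the cross term becomes $2|\langle Ax,x\rangle|^2$. So you use the same ingredients (Buzano with $e=x$, AM--GM, the definition of $\|\cdot,\cdot\|_e$) but bypass the Cartesian decomposition and the rotation step.

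What your version buys is transparency. The Buzano half by itself proves $w^2(A)\le\frac12w(A^2)+\frac14\|A^*A+AA^*\|$, which is the Abu-Omar--Kittaneh bound \eqref{A1}. The other half is just $w^2(A)=\frac12w_e^2(A,A^*)\le\frac12\|A,A^*\|_e^2$. Hence the right-hand side of the theorem is precisely the average of the right-hand sides of these two bounds, so it can never be sharper than the better of them. Moreover, your second estimate really gives $w_e^2(A,A^*)$, so you obtain the paper's subsequent corollary for free. Because $w_e^2(A,A^*)=2w^2(A)$, that corollary is in fact equivalent to \eqref{A1}.
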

\begin{proof}
    Take $x,y\in \mathcal{H}$ with $\|x\|=\|y\|=1$. We have
    $$\langle Re(A)x,y\rangle= \frac{\langle Ax,y\rangle+\langle A^*x,y\rangle.}{2}$$
    Therefore, 
    \begin{eqnarray*}
         |\langle Re(A)x,y\rangle|^2 &\leq&   \frac{1}{4} \left(|\langle Ax,y\rangle|^2+|\langle A^*x,y\rangle|^2+ 2|\langle Ax,y\rangle \langle A^*x,y\rangle|  \right)\\
         &\leq&   \frac{1}{4} \left(|\langle Ax,y\rangle|^2+|\langle A^*x,y\rangle|^2+ \|Ax\| \|A^*x\|+  |\langle Ax,A^*x\rangle|  \right) \\
         && \quad (\textit{by Lemma \ref{lem1}})\\
         &\leq&  \frac{1}{4} \left(|\langle Ax,y\rangle|^2+|\langle A^*x,y\rangle|^2+ \frac12 (\|Ax\|^2+ \|A^*x\|^2)+  |\langle A^2x,x\rangle|  \right)\\
         && \quad (\textit{by AM-GM inequality})\\
         &\leq& \frac14 \left( \|A,A^*\|_e^2+ \frac12\|A^*A+AA^*\|+w(A^2) \right).
    \end{eqnarray*}
    Taking the supremum over $\|x\|=\|y\|=1,$ we get
    $$\|Re(A)\|^2 \leq \frac14 \left( \|A,A^*\|_e^2+ \frac12\|A^*A+AA^*\|+w(A^2) \right).$$
    Replacing $A$ by $e^{i\theta} A$, $\theta \in \mathbb{R}$, we get
     $$\|Re(e^{i\theta}A)\|^2 \leq \frac14 \left( \|A,A^*\|_e^2+ \frac12\|A^*A+AA^*\|+w(A^2) \right).$$
     Taking the supremum over $\theta \in \mathbb{R},$ we get
     $$w^2(A)= \sup_{\theta\in \mathbb{R}}\|Re(e^{i\theta}A)\|^2 \leq \frac14 \left( \|A,A^*\|_e^2+ \frac12\|A^*A+AA^*\|+w(A^2) \right),$$ as desired.
\end{proof}

Similarly to Theorem \ref{th1}, we can also prove the following numerical radius bound.

\begin{cor}\label{cor1}
      If $T\in \mathcal{B}(\mathcal{H}),$ then
    $$w^2(A)\leq \frac14 w(A^2)+ \frac14 w_e^2(A, A^*)+ \frac18 \|A^*A+AA^*\|.$$
\end{cor}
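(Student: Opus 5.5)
Following Theorem \ref{th1}, we specialize to the diagonal case $y=x$ and work with $|\langle Re(e^{i\theta}A)x,x\rangle|$. The sup over unit $x$ of $|\langle Re(A)x,x\rangle|$ equals $\|Re(A)\|$, since $Re(A)$ is self-adjoint. Hence only the diagonal pairs are needed, and the Euclidean operator norm $\|A,A^*\|_e$ can be replaced by the Euclidean operator radius $w_e(A,A^*)$.

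Fix a unit vector $x\in\mathcal{H}$ and write $\langle Re(A)x,x\rangle=\frac12(\langle Ax,x\rangle+\langle A^*x,x\rangle)$. Squaring and applying the triangle inequality gives
\[
|\langle Re(A)x,x\rangle|^2\le \frac14\left(|\langle Ax,x\rangle|^2+|\langle A^*x,x\rangle|^2+2|\langle Ax,x\rangle\langle A^*x,x\rangle|\right).
\]
The first two terms are bounded by $w_e^2(A,A^*)$ by definition. For the cross term, we write $\langle A^*x,x\rangle=\langle x,Ax\rangle$ and apply Lemma \ref{lem1} with $e=x$ and the vectors $Ax$ and $A^*x$. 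This gives
\[
2|\langle Ax,x\rangle\langle x,A^*x\rangle|\le \|Ax\|\|A^*x\|+|\langle Ax,A^*x\rangle|.
\]
Then the AM-GM inequality gives $\|Ax\|\|A^*x\|\le\frac12\langle (A^*A+AA^*)x,x\rangle\le \frac12\|A^*A+AA^*\|$. Also $|\langle Ax,A^*x\rangle|=|\langle A^2x,x\rangle|\le w(A^2)$. Together these yield
\[
|\langle Re(A)x,x\rangle|^2\le \frac14\left(w_e^2(A,A^*)+\frac12\|A^*A+AA^*\|+w(A^2)\right).
\]

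Taking the supremum over unit $x$ then bounds $\|Re(A)\|^2$. Next we replace $A$ by $e^{i\theta}A$. The right-hand side does not change: $w_e(e^{i\theta}A,e^{-i\theta}A^*)=w_e(A,A^*)$ because only moduli enter, $w(e^{2i\theta}A^2)=w(A^2)$, and $A^*A+AA^*$ is unchanged. Finally, the identity $w(A)=\sup_\theta\|Re(e^{i\theta}A)\|$ finishes the proof. There is no real obstacle; the only point to check is that the Buzano step really applies with $e=x$ and that we may restrict to $y=x$, which holds because $Re(A)$ is self-adjoint.
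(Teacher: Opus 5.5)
Your proof is correct and follows the paper's own route: the paper proves this corollary only by saying it goes ``similarly to'' the preceding $\|A,A^*\|_e$ theorem, i.e.\ that argument run with $y=x$ (triangle inequality on $\langle Re(A)x,x\rangle$, Buzano with unit vector $x$ applied to $Ax$ and $A^*x$, AM--GM, then $A\mapsto e^{i\theta}A$ and $w(A)=\sup_\theta\|Re(e^{i\theta}A)\|$), and you also check the rotation invariance of the right-hand side, which the paper leaves implicit. One small wording slip: before invoking Buzano, the fact you actually need is $|\langle A^*x,x\rangle|=|\langle x,A^*x\rangle|$ (complex conjugation), not $\langle A^*x,x\rangle=\langle x,Ax\rangle$; your displayed inequality is nonetheless correct as written.
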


 \begin{remark}
   (i)  It is easy to check $w_e(A, A^*)\leq \|A,A^*\|_e$ and from the Cauchy-Schwarz inequality it follows that $\|A,A^*\|_e^2 \leq  \|A^*A+AA^*\|.$ Also, from the power inequality of the numerical radius and \eqref{Kittaneh_Eq_1}, we have $w(A^2)\leq w^2(A) \leq \frac12  \|A^*A+AA^*\| .$ Thus,
     \begin{eqnarray*}
      w^2(A) &\leq&  \frac14 w(A^2)+ \frac14 w_e^2(A, A^*)+ \frac18 \|A^*A+AA^*\| \\
      &\leq&   \frac14 w(A^2)+ \frac14 \|A, A^*\|_e^2+ \frac18 \|A^*A+AA^*\| \\
       &\leq& \frac12 \|A^*A+AA^*\| \\
       &\leq&  \|A\|^2. 
     \end{eqnarray*}
     Therefore, both the numerical radius bounds in Theorem \ref{th1} and Corollary \ref{cor1} refine the well known bound $w^2(A) \leq \frac12  \|A^*A+AA^*\|$, proved by Kittaneh \cite{Kittaneh2}. To see proper refinement, one can take a simple matrix $A=\begin{pmatrix}
         0&1\\
         0&0
     \end{pmatrix}.$

     (ii) From the above inequalities, we conclude if $w(A)=\|A\|$ (e.g. if $A$ is normal), then $$\|A\|= \frac1{\sqrt{2}} \|A, A^*\|_e
    .$$
 \end{remark}


Next, we need the following lemmas to present more results in this direction. The first lemma is known as McCarthy inequality.

 \begin{lemma} \cite{Kittaneh88} \label{lem3-}
        Let $A\in \mathcal{B}(\mathcal{H})$ be positive. Then 
        $$\langle Ax,x \rangle^p\leq \langle A^px,x \rangle,$$
        for all $p\geq 1$ and for all $x\in \mathcal{H}$ with $\|x\|=1.$
    \end{lemma}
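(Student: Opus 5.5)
The McCarthy inequality (Lemma \ref{lem3-}) is a consequence of Jensen's inequality for the spectral measure of $A$. I will use the spectral theorem to turn the inner product into an integral. Since $A\geq 0$, there is a projection-valued spectral measure $E$ supported on $\sigma(A)\subseteq[0,\|A\|]$ with $A=\int t\,dE(t)$. For a fixed unit vector $x$, set $\mu_x(\cdot)=\langle E(\cdot)x,x\rangle$; this is a positive Borel measure of total mass $\langle Ix,x\rangle=\|x\|^2=1$, so it is a probability measure. By the functional calculus,
\[
\langle Ax,x\rangle=\int_{\sigma(A)} t\,d\mu_x(t), \qquad \langle A^px,x\rangle=\int_{\sigma(A)} t^p\,d\mu_x(t).
\]

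For $p\geq 1$ the function $\varphi(t)=t^p$ is convex on $[0,\infty)$. Jensen's inequality for the probability measure $\mu_x$ then gives
\[
\Big(\int t\,d\mu_x\Big)^p\leq \int t^p\,d\mu_x,
\]
which is exactly $\langle Ax,x\rangle^p\leq \langle A^px,x\rangle$. Positivity of $A$ is needed here: it places the spectrum in $[0,\infty)$, so $A^p$ is defined and $t^p$ is convex on the support of $\mu_x$.

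If one prefers to avoid Jensen in measure form, a tangent-line argument gives the same result. Put $s=\langle Ax,x\rangle\geq 0$. If $s=0$, the claim is trivial because $\langle A^px,x\rangle\geq 0$. If $s>0$, convexity gives $t^p\geq s^p+ps^{p-1}(t-s)$ for all $t\geq0$. Applying the functional calculus to this scalar inequality yields the operator inequality
\[
A^p\geq s^pI+ps^{p-1}(A-sI).
\]
Taking $\langle\cdot\, x,x\rangle$ with $\|x\|=1$ makes the linear term vanish, and we obtain $\langle A^px,x\rangle\geq s^p$.

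The only point that needs care is the passage from a scalar inequality to an operator inequality, i.e.\ that $f\geq g$ on $\sigma(A)$ implies $f(A)\geq g(A)$. This is the standard positivity property of the continuous functional calculus, so I do not expect a real obstacle.
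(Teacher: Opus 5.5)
Your proof is correct. Both the Jensen argument with the probability measure $\mu_x(\cdot)=\langle E(\cdot)x,x\rangle$ and the tangent-line variant (which is Jensen's proof carried out at operator level using the order-preserving property of the continuous functional calculus) give $\langle Ax,x\rangle^p\leq\langle A^px,x\rangle$. The paper does not prove this lemma; it quotes McCarthy's inequality from \cite{Kittaneh88}, and your argument is the standard proof of that fact.
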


The second lemma is also an inner product inequality  involving a $2\times 2$ positive operator matrix.

    \begin{lemma}\cite{Kittaneh88} \label{lem5}
    Let $A,B,C\in\mathcal{B(\mathcal{H}})$ where $A$ and $B$ are positive.Then  the operator matrix 
$\begin{bmatrix}
    A & C^*\\
    C &  B
\end{bmatrix}$
    $\in\mathcal{B(\mathcal{H\oplus H)}}$ is positive if and only if \begin{eqnarray*}
        |\langle Cx,y\rangle|^2\leq \langle Ax,x\rangle\langle By,y\rangle.
    \end{eqnarray*}
    \end{lemma}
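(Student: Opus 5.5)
The plan is to prove both implications directly from the quadratic form of the operator matrix. Write $M=\begin{bmatrix} A & C^*\\ C & B\end{bmatrix}$. For $u,v\in\mathcal{H}$,
\begin{eqnarray*}
\left\langle M\begin{bmatrix} u\\ v\end{bmatrix},\begin{bmatrix} u\\ v\end{bmatrix}\right\rangle = \langle Au,u\rangle+\langle C^*v,u\rangle+\langle Cu,v\rangle+\langle Bv,v\rangle = \langle Au,u\rangle+2\,Re\langle Cu,v\rangle+\langle Bv,v\rangle .
\end{eqnarray*}
$M$ is positive exactly when this expression is nonnegative for all $u,v$. Both directions reduce to statements about this real-valued form.

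For the implication ``$M$ positive $\Rightarrow$ inequality'', fix $x,y$. Choose $\theta\in\mathbb{R}$ with $e^{i\theta}\langle Cx,y\rangle=-|\langle Cx,y\rangle|$. Substitute $u=te^{i\theta}x$ (with $t\in\mathbb{R}$) and $v=y$. Setting $a=\langle Ax,x\rangle\ge 0$, $b=\langle By,y\rangle\ge 0$ and $c=|\langle Cx,y\rangle|$, positivity gives
\[
a t^2-2ct+b\ge 0 \quad \text{for all } t\in\mathbb{R}.
\]
If $a>0$, the nonpositive discriminant gives $c^2\le ab$. If $a=0$, the linear function $b-2ct$ can stay nonnegative for all $t$ only when $c=0$, so again $c^2\le ab$. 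This degenerate case is the only point that needs care.

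For the converse, assume $|\langle Cx,y\rangle|^2\le \langle Ax,x\rangle\langle By,y\rangle$ for all $x,y\in\mathcal{H}$. Both sides are homogeneous of degree two in $x$ and in $y$, so the inequality holds for arbitrary vectors, not only unit ones. Then
\[
\langle Au,u\rangle+2\,Re\langle Cu,v\rangle+\langle Bv,v\rangle\ \ge\ \langle Au,u\rangle+\langle Bv,v\rangle-2\sqrt{\langle Au,u\rangle\langle Bv,v\rangle}=\left(\sqrt{\langle Au,u\rangle}-\sqrt{\langle Bv,v\rangle}\right)^2\ge 0 .
\]
This is just the AM-GM inequality, so $M\ge 0$.

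I expect no real obstacle. The only delicate points are the $a=0$ case of the discriminant argument and checking that the inequality is homogeneous, so that it applies to all vectors.
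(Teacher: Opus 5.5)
The paper does not prove this lemma; it only quotes it from \cite{Kittaneh88}, so there is no in-paper argument to compare against. Your proof is correct and complete. The quadratic-form identity $\langle Au,u\rangle+2\,Re\langle Cu,v\rangle+\langle Bv,v\rangle$ is right. The substitution $u=te^{i\theta}x$, $v=y$ does give $at^2-2ct+b\ge 0$ for all real $t$, and you treat the degenerate case $a=0$ properly. The converse, via $2\,Re\langle Cu,v\rangle\ge -2\sqrt{\langle Au,u\rangle\langle Bv,v\rangle}$ and completing the square, is the standard argument. Your homogeneity remark also covers the reading of the statement in which the inequality is assumed only for unit vectors.

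The forward direction has a slightly quicker route. When $M\ge 0$, the map $(z,w)\mapsto\langle Mz,w\rangle$ is a positive semidefinite sesquilinear form on $\mathcal{H}\oplus\mathcal{H}$. Its Cauchy--Schwarz inequality, applied to $z=x\oplus 0$ and $w=0\oplus y$, gives $|\langle Cx,y\rangle|^2\le\langle Ax,x\rangle\langle By,y\rangle$ immediately. Your discriminant computation is exactly a re-derivation of that Cauchy--Schwarz inequality in this special case, so the two routes are the same argument.
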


The third lemma is known as Bohr’s inequality, which is for $n$ positive real numbers.

    \begin{lemma} \cite{V} \label{lem4}
    Suppose $a_1, a_2, \ldots, a_n$ are positive real numbers. Then
    \begin{eqnarray*}
        \left(\sum_{i=1}^{n}a_i\right)^r\leq n^{r-1}\sum_{i=1}^{n}a_i^r,
    \end{eqnarray*}
    for all $r\geq1$.
    \end{lemma}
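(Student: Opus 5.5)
The statement is Bohr's inequality $\left(\sum_{i=1}^n a_i\right)^r\leq n^{r-1}\sum_{i=1}^n a_i^r$ for positive reals and $r\geq 1$. I would prove it by convexity. The function $\varphi(t)=t^r$ is convex on $[0,\infty)$ when $r\geq 1$, since $\varphi''(t)=r(r-1)t^{r-2}\geq 0$ for $t>0$. For $r=1$ the claim is an equality, so this case needs no argument.

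For $r>1$, apply Jensen's inequality to $\varphi$ with equal weights $\frac1n$ at the points $a_1,\dots,a_n$:
$$\left(\frac1n\sum_{i=1}^n a_i\right)^r\leq \frac1n\sum_{i=1}^n a_i^r .$$
Multiplying both sides by $n^r$ gives $\left(\sum_{i=1}^n a_i\right)^r\leq n^{r-1}\sum_{i=1}^n a_i^r$, which is the statement.

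To keep the argument self-contained without citing Jensen, I would instead use H\"older's inequality with exponents $r$ and $r'=\frac{r}{r-1}$, applied to the vectors $(1,\dots,1)$ and $(a_1,\dots,a_n)$:
$$\sum_{i=1}^n a_i\leq n^{1/r'}\left(\sum_{i=1}^n a_i^r\right)^{1/r}.$$
Raising both sides to the power $r$ gives the same bound, because $r/r'=r-1$.

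There is no substantive obstacle here. The only point needing care is the exponent bookkeeping in the H\"older route, namely checking that $n^{r/r'}=n^{r-1}$.
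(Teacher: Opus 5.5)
Your proof is correct. Jensen's inequality for the convex function $\varphi(t)=t^r$ with equal weights $\frac1n$, followed by multiplying both sides by $n^r$, gives exactly the stated bound. The H\"older route is equally valid, since $r/r'=r-1$.

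The paper gives no argument for this lemma; it simply cites Vasi\'c and Ke\v{c}ki\'c. So there is no paper proof to compare against, and your proposal supplies the standard self-contained proof in place of the citation.

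One small advantage of your argument: neither version uses strict positivity, so both prove the inequality for all nonnegative $a_i$. That is the case the paper actually needs. It applies the lemma (with $r=p$) to the numbers $\left|\left\langle A_i^*X_iB_ix,x\right\rangle\right|$, and some of these may be zero.
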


Here we also recall here the notion of double convex function (see \cite{Bourin}).
A function $h:[0,\infty)\to [0,\infty)$ is said to be double convex if $h(t)$ is convex and $h(t)$ is geometrically convex, i.e., $h(\sqrt{ab})\leq \sqrt{h(a)h(b)}$ for all $a,b\in [0,\infty).$

Now, using the notion of double convex function and above lemmas, we present the numerical radius inequalities.

    \begin{theorem}\label{theorem4}
        Let $A,B,C\in\mathcal{B(\mathcal{H}})$, where $A$ and $B$ are positive, and let $h$ be an increasing double convex function on $[0,\infty)$. If the operator matrix
$\begin{bmatrix}
    A & C^*\\
    C &  B
\end{bmatrix}$ 
is postive, then
\[h(w(C))\leq \sqrt{\frac{\alpha\beta}{2}} w_e\left(\frac{h(A)}{\alpha},\frac{h(B)}{\beta}\right),\]
for all $\alpha, \beta>0$.
\end{theorem}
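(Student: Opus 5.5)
Fix a unit vector $x\in\mathcal{H}$. The plan is to bound $h(|\langle Cx,x\rangle|)$ by a quantity that, after the supremum over $x$, becomes the right-hand side of the statement. The two ingredients are Lemma \ref{lem5} and the geometric convexity of $h$ for the first reduction, and Jensen's inequality together with a weighted AM-GM step for the second.

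\textbf{Step 1.} Since the operator matrix is positive, Lemma \ref{lem5} with $y=x$ gives
\[
|\langle Cx,x\rangle|\leq \sqrt{\langle Ax,x\rangle\langle Bx,x\rangle}.
\]
Because $h$ is increasing and geometrically convex,
\[
h(|\langle Cx,x\rangle|)\leq h\left(\sqrt{\langle Ax,x\rangle\langle Bx,x\rangle}\right)\leq \sqrt{h(\langle Ax,x\rangle)\,h(\langle Bx,x\rangle)}.
\]

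\textbf{Step 2.} Since $h$ is convex and $A$ is positive, the Jensen-type operator inequality $h(\langle Ax,x\rangle)\leq\langle h(A)x,x\rangle$ holds for unit $x$. This is the spectral-measure form of Jensen's inequality and generalizes Lemma \ref{lem3-}. The same bound holds for $B$. Hence
\[
h(|\langle Cx,x\rangle|)\leq \sqrt{\langle h(A)x,x\rangle\langle h(B)x,x\rangle}.
\]

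\textbf{Step 3.} This step inserts the weights. Write
\[
\langle h(A)x,x\rangle\langle h(B)x,x\rangle=\alpha\beta\left\langle \frac{h(A)}{\alpha}x,x\right\rangle\left\langle \frac{h(B)}{\beta}x,x\right\rangle,
\]
and use $ab\leq \frac{a^2+b^2}{2}$ for the nonnegative numbers $a=\langle \frac{h(A)}{\alpha}x,x\rangle$ and $b=\langle \frac{h(B)}{\beta}x,x\rangle$. This yields
\[
h(|\langle Cx,x\rangle|)\leq \sqrt{\frac{\alpha\beta}{2}}\left(\left|\left\langle \frac{h(A)}{\alpha}x,x\right\rangle\right|^2+\left|\left\langle \frac{h(B)}{\beta}x,x\right\rangle\right|^2\right)^{1/2}\leq \sqrt{\frac{\alpha\beta}{2}}\, w_e\left(\frac{h(A)}{\alpha},\frac{h(B)}{\beta}\right).
\]

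\textbf{Step 4.} Take the supremum over unit $x$. Since $h$ is continuous (it is convex on $[0,\infty)$ and increasing, so continuity at $0$ is harmless) and increasing, $\sup_x h(|\langle Cx,x\rangle|)=h(w(C))$. This gives the claimed inequality.

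The main obstacle is Step 2. Lemma \ref{lem3-} only covers $h(t)=t^p$, so I would justify the general Jensen inequality directly via the spectral theorem: $\langle h(A)x,x\rangle=\int h(\lambda)\,d\mu_x(\lambda)$ with $\mu_x$ a probability measure. If an appeal to that is undesirable, I would restrict the argument to power functions via Lemma \ref{lem3-}. I would also double-check the continuity needed in Step 4, since a convex function on $[0,\infty)$ could jump at $0$; monotonicity rules out an upward jump at $0$ for the supremum argument, because $h(0)\le h(t)$.
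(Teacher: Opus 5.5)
Your proof is correct and follows the paper's argument step for step: Lemma~\ref{lem5} with $y=x$, then monotonicity and geometric convexity of $h$, then the Jensen inequality $h(\langle Ax,x\rangle)\le\langle h(A)x,x\rangle$, then a weighted AM--GM step, and finally the supremum over unit $x$. Your single inequality $ab\le\frac{a^2+b^2}{2}$ does the work of the paper's \eqref{EQN-g} together with the bound $u+v\le\sqrt{2}\sqrt{u^2+v^2}$, which the paper uses without stating it; your justification of Jensen via the spectral measure and of the continuity of $h$ at $0$ fills in details the paper leaves implicit, and you also write the geometric-convexity step correctly, whereas the paper's display has a typo there (it applies $h$ to the product $\langle Ax,x\rangle\langle Bx,x\rangle$ instead of taking the product of the two $h$-values).
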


\begin{proof}
To prove this result first we note the following inequality.
For any two positive real numbers $a$ and $b$, we have 
\begin{eqnarray}\label{EQN-g}
    \sqrt{ab}\le \frac{\sqrt{\alpha\beta}}{2}\left(\frac{a}{\alpha}+\frac{b}{\beta}\right),
\end{eqnarray}
where $\alpha,\beta>0$. This follows by replacing $a$ and $b$ by $\frac{a}{\alpha}$ and $\frac{b}{\beta}$, respectively, in 
 $\sqrt{ab} \leq \frac{a+b}{2}$.
Now, using Lemma \ref{lem5} and increasing double convex function $h$, we obtain
 \begin{eqnarray*}
       h( |\langle Cx,x\rangle|)&\leq &h(\sqrt{\langle Ax,x\rangle\langle Bx,x\rangle})\\
       &\leq &\sqrt{h(\langle Ax ,x\rangle\langle Bx,x\rangle})\\
       &\leq &\sqrt{\langle h(A)x ,x\rangle\langle h(B)x,x\rangle}\\
       &\leq & \sqrt{\frac{\alpha\beta}{2}}\left(\sqrt{\frac{|\langle h(A)x,x\rangle|^2}{\alpha^2}+\frac{|\langle h(B)x,x\rangle|^2}{\beta^2}}\right) \quad (\text{by \eqref{EQN-g}})\\
        &=&\sqrt{\frac{\alpha\beta}{2}}\left(\sqrt{\left| \left\langle\frac{h(A)x}{\alpha},x\right\rangle \right|^2+\left|\left\langle \frac{h(B)x}{\beta},x\right\rangle\right|^2}\right).
        \end{eqnarray*}
        Taking the supremum over $\|x\|=1$, we get the desired inequality.        
    \end{proof}

For every $B,C \in \mathcal{B}(\mathcal{H})$, we observe that 
$
\begin{bmatrix}
BB^* & BC \\
C^*B^* & C^*C
\end{bmatrix}
\geq 0$.
Using this operator matrix in Theorem \ref{theorem4}, we obtain the following corollary.

  \begin{cor}
Let $B,C \in \mathcal{B}(\mathcal{H})$, and let $h$ be a double convex 
increasing function on $[0,\infty)$.  Then
\[
h(w(BC))
\le
\sqrt{\frac{\alpha\beta}{2}}\;
w_e\!\left(
\frac{h(BB^*)}{\alpha},
\frac{h(C^*C)}{\beta}
\right),
\]
for all $\alpha,\beta>0$.
In particular, for $h(t)=t$, 
\begin{eqnarray}\label{EQN3-}
    w(BC)
\le
\min_{\alpha, \beta>0}\sqrt{\frac{\alpha\beta}{2}}\;
w_e\!\left(
\frac{BB^*}{\alpha},
\frac{C^*C}{\beta}
\right).
\end{eqnarray}
  \end{cor}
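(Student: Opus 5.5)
The plan is to apply Theorem \ref{theorem4} to the positive operator matrix displayed just before the corollary. First I will verify that
$\begin{bmatrix} BB^* & BC \\ C^*B^* & C^*C \end{bmatrix}\geq 0$. This holds because the matrix factors as $X X^*$ with $X=\begin{bmatrix} B & 0\\ C^* & 0\end{bmatrix}$. Indeed,
\[
XX^*=\begin{bmatrix} B\\ C^*\end{bmatrix}\begin{bmatrix} B^* & C\end{bmatrix}=\begin{bmatrix} BB^* & BC\\ C^*B^* & C^*C\end{bmatrix}.
\]
An operator of the form $XX^*$ is positive, and so are the diagonal blocks $BB^*$ and $C^*C$.

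Next I match this with the notation of Theorem \ref{theorem4}. The matrix there is $\begin{bmatrix} A & C^*\\ C & B\end{bmatrix}$, with the compressed operator appearing in the lower-left corner. Here the lower-left block is $C^*B^*=(BC)^*$, the upper-right block is $BC$, and the diagonal blocks are $A\to BB^*$ and $B\to C^*C$. The theorem therefore gives
\[
h\big(w((BC)^*)\big)\le \sqrt{\tfrac{\alpha\beta}{2}}\, w_e\Big(\tfrac{h(BB^*)}{\alpha},\tfrac{h(C^*C)}{\beta}\Big).
\]
Since $w(T^*)=w(T)$ for every operator $T$, the left side equals $h(w(BC))$, which is the first claim. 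Alternatively, one can use Lemma \ref{lem5} directly. The inner-product bound $|\langle C^*B^*x,x\rangle|^2\le\langle BB^*x,x\rangle\langle C^*Cx,x\rangle$ is unchanged in modulus under conjugation, so the chain of estimates in the proof of Theorem \ref{theorem4} runs verbatim.

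For the particular case, take $h(t)=t$. This is increasing and convex, and it is geometrically convex with equality, since $\sqrt{ab}=\sqrt{ab}$. So it is double convex, and the inequality holds for every $\alpha,\beta>0$. Taking the infimum over $\alpha,\beta>0$ then yields \eqref{EQN3-}.

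The only point needing care is that the statement writes $\min$ rather than $\inf$. I would read it as an infimum, or else argue that it is attained. Attainment can be argued by homogeneity: scaling $(\alpha,\beta)\to(t\alpha,t\beta)$ leaves the expression unchanged, so one may fix $\alpha\beta=1$ and minimise over a single parameter. As that parameter tends to $0$ or $\infty$, the bound is at least $\sqrt{\alpha\beta/2}\,\max(\|BB^*\|/\alpha,\|C^*C\|/\beta)/\sqrt{\cdot}$, which blows up unless $B=0$ or $C=0$, and those cases are trivial. Continuity of $w_e$ then gives a minimiser. This attainment step is the main (mild) obstacle; the inequality itself is immediate.
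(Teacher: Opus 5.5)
Your proposal is correct and follows the paper's argument: the paper simply observes that $\begin{bmatrix} BB^* & BC \\ C^*B^* & C^*C\end{bmatrix}\geq 0$ and applies Theorem \ref{theorem4}, and your factorization through $XX^*$ and the remark $w((BC)^*)=w(BC)$ make explicit what the paper leaves implicit. One caveat on your optional attainment step: when exactly one of $B$, $C$ is zero, the infimum is $0$ and is not attained, so in that case ``$\min$'' must be read as ``$\inf$''; the inequality itself is unaffected.
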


Suppose $A=U|A|$ is the polar decomposition of $A$. Setting $B=U|A|^{1-t}$ and $C=|A|^t$ (where $t\in [0,1]$) in \eqref{EQN3-}, we obtain the following bound.

\begin{cor}\label{PN}
Let $A\in \mathcal{B}(\mathcal{H})$. Then
    \begin{eqnarray*} 
    w(A)
\le
\min_{\alpha, \beta>0}\sqrt{\frac{\alpha\beta}{2}}\;
w_e\!\left(
\frac{1}{\alpha}|A^*|^{2(1-t)},
\frac{1}{\beta}|A|^{2t}
\right),
\end{eqnarray*}
for all $t\in [0,1]$.
\end{cor}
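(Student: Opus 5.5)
The plan is to apply inequality \eqref{EQN3-} with $B=U|A|^{1-t}$ and $C=|A|^t$, where $A=U|A|$ is the polar decomposition. There are two things to check: that $BC=A$, and that $BB^*$ and $C^*C$ equal $|A^*|^{2(1-t)}$ and $|A|^{2t}$.

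First, $C^*C=|A|^{2t}$ holds because $|A|^t$ is positive. Next, $BC=U|A|^{1-t}|A|^t=U|A|=A$ by the functional calculus for the positive operator $|A|$; this is valid for $t\in[0,1]$ with the convention $|A|^0=I$.

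The main step is the identity $BB^*=U|A|^{2(1-t)}U^*=|A^*|^{2(1-t)}$. I would argue it as follows.

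\begin{itemize}
\item From $AA^*=U|A|^2U^*$ and $|A^*|^2=AA^*$, we get $|A^*|^2=U|A|^2U^*$.
\item $U$ is a partial isometry whose initial space $\overline{\mathrm{ran}\,|A|}$ reduces $|A|$. So $U^*U$ is the projection onto that space and commutes with $|A|$.
\item Hence $(U|A|U^*)^n=U|A|^nU^*$ for every $n\ge1$, and therefore $p(U|A|U^*)=Up(|A|)U^*$ for every polynomial $p$ with $p(0)=0$.
\item By Weierstrass approximation on $[0,\|A\|]$, this extends to continuous $f$ with $f(0)=0$: $f(U|A|U^*)=Uf(|A|)U^*$.
\item Taking $f(s)=s^2$ gives $U|A|U^*=|A^*|$. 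Then taking $f(s)=s^{2(1-t)}$ with $t<1$ gives $U|A|^{2(1-t)}U^*=|A^*|^{2(1-t)}$.
\end{itemize}

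The case $t=1$ needs separate care, since there $f(0)\neq0$. In that case $BB^*=UU^*$, which is the range projection of $A$, and not $I$ in general. Still, $UU^*\le I=|A^*|^0$, so $w_e(UU^*/\alpha,\,|A|^2/\beta)\le w_e(I/\alpha,\,|A|^2/\beta)$, because both operators are positive and the inner products are nonnegative. The inequality therefore survives in that endpoint case.

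Substituting these identities into \eqref{EQN3-} gives the stated bound for every $t\in[0,1]$ and all $\alpha,\beta>0$; minimizing over $\alpha,\beta$ finishes the proof. The main obstacle is the functional calculus identity for $BB^*$, specifically the possible kernel of $|A|$ and the $t=1$ endpoint. Both are handled by the arguments above.
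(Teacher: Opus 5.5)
Your proof is correct and follows the paper's route: it substitutes $B=U|A|^{1-t}$ and $C=|A|^t$ into \eqref{EQN3-}. The paper leaves implicit both the identity $BB^*=|A^*|^{2(1-t)}$ and the endpoint $t=1$, where $BB^*=UU^*\le I$. Your functional-calculus argument and the monotonicity of $w_e$ in its positive first entry handle both of these correctly.
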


We remark that Corollary \ref{PN} refines the existing bound $w^2(A) \leq \frac12 \left\| |A|^2+|A^*|^2\right\|,$ see Remark \ref{REM1}, below.

Next inequality also for the products of operators, reads as follows.

\begin{theorem}\label{th1}
    Let $A,B\in \mathcal{B}(\mathcal{H})$ be such that $|A|B=B^*|A|$. Let $f$, $g$ be non negative continuous functions on $[0,\infty)$ such that $f(\lambda)g(\lambda)=\lambda$ for all $\lambda\in[0,\infty)$.
    Then 
  \begin{eqnarray}\label{EQN2}
    w(AB)  \leq \frac{r(B)}{\sqrt{2}}w_e\left(f^2(|A|),g^2(|A^*|)\right).
\end{eqnarray}
In particular,
\begin{eqnarray}\label{EQN3}
    w(AB)\leq \frac{r(B)}{\sqrt{2}}w_e\left(|A|^{2t},|A^*|^{2(1-t)}\right), \quad \text{for all $t\in[0,1].$}
\end{eqnarray}
\end{theorem}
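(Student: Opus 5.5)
The plan is to work pointwise with a unit vector $x$ and apply Lemma \ref{lem2} with $y=x$, which is exactly what the commutation hypothesis $|A|B=B^*|A|$ is designed for. In Kittaneh's Theorem 5 of \cite{Kittaneh88} the condition $|A|B=B^*|A|$ is part of the hypotheses. The statement of Lemma \ref{lem2} in this paper omits it, but the present theorem supplies it, so the lemma applies. We get, for every unit vector $x\in\mathcal{H}$,
\[
|\langle ABx,x\rangle| \le r(B)\,\|f(|A|)x\|\,\|g(|A^*|)x\| = r(B)\,\langle f^2(|A|)x,x\rangle^{1/2}\langle g^2(|A^*|)x,x\rangle^{1/2}.
\]

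Next I convert the geometric mean into a Euclidean combination. For nonnegative reals $a,b$ we have $\sqrt{ab}\le \frac{a+b}{2}\le \frac{1}{\sqrt2}\sqrt{a^2+b^2}$; this is the case $\alpha=\beta=1$ of \eqref{EQN-g} combined with $(a+b)^2\le 2(a^2+b^2)$, i.e. Lemma \ref{lem4} with $n=r=2$. Applying this with $a=\langle f^2(|A|)x,x\rangle$ and $b=\langle g^2(|A^*|)x,x\rangle$ gives
\[
|\langle ABx,x\rangle|\le \frac{r(B)}{\sqrt2}\Big(|\langle f^2(|A|)x,x\rangle|^2+|\langle g^2(|A^*|)x,x\rangle|^2\Big)^{1/2}\le \frac{r(B)}{\sqrt2}\, w_e\big(f^2(|A|),g^2(|A^*|)\big).
\]
Taking the supremum over $\|x\|=1$ yields \eqref{EQN2}.

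For \eqref{EQN3}, choose $f(\lambda)=\lambda^t$ and $g(\lambda)=\lambda^{1-t}$ with $t\in[0,1]$. These are continuous, nonnegative, and satisfy $f(\lambda)g(\lambda)=\lambda$, with the convention $0^0=1$ at the endpoints. Then $f^2(|A|)=|A|^{2t}$ and $g^2(|A^*|)=|A^*|^{2(1-t)}$.

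The only delicate point is the first step: one must make sure Lemma \ref{lem2} is being used in the form that genuinely requires, and is justified by, $|A|B=B^*|A|$. If one wanted to avoid citing it, the fallback is to reprove it. Write $A=U|A|$ and use $f(|A^*|)U=Uf(|A|)$. The hypothesis makes $|A|^{1/2}B|A|^{-1/2}$ behave like a self-adjoint-type operator on the range, so a spectral radius estimate then gives the factor $r(B)$. I expect this justification, rather than the scalar inequalities, to be the main obstacle.
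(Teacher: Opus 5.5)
Your proposal is correct and follows the paper's proof essentially step for step. You apply Lemma~\ref{lem2} (Kittaneh's Theorem~5, whose hypothesis $|A|B=B^*|A|$ you rightly note is supplied by this theorem) with $y=x$, use $\sqrt{ab}\le \frac{1}{\sqrt2}\sqrt{a^2+b^2}$, take the supremum over unit vectors, and then specialize to $f(\lambda)=\lambda^t$, $g(\lambda)=\lambda^{1-t}$; the fallback sketch in your last paragraph is unnecessary and too vague to stand alone, but citing Kittaneh's result suffices.
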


\begin{proof}
Take $x \in  \mathcal{H}$ with $\|x\|=1.$
Using Lemma \ref{lem2}, we have
    \begin{eqnarray*}
        |\langle ABx,x\rangle| &\leq&  r(B) \left(\langle f(|A|)x,f(|A|)x\rangle \langle f(|A^*|)x,f(|A^*|)x\rangle \right)^\frac{1}{2} \\ &\leq & \frac{r(B)}{\sqrt{2}}\left(|\langle f^2(|A|)x,x\rangle|^2+|\langle g^2(|A^*|)x,x\rangle|^2\right)^\frac{1}{2}.
\end{eqnarray*}
Taking the supremum over $\|x\|=1$, we get the desired first inequality \eqref{EQN2}.
In particular, setting $f(\lambda)=\lambda^{t}$ and $g(\lambda)=\lambda^{1-t}$ in \eqref{EQN2}, we obtain the desired second inequality \eqref{EQN3}.
 \end{proof}

Setting $B=I$ in Theorem \ref{th1}, we obtain the following novel upper bound for the numerical radius.

\begin{cor}\label{cor1}
     Let $A\in \mathcal{B}(\mathcal{H})$. Then 
    \begin{eqnarray}\label{eqn1}
            w(A) \leq \frac{1}{\sqrt{2}}w_e\left(|A|^{2t},|A^*|^{2(1-t)}\right),
    \end{eqnarray}
    for all $t\in [0,1].$ 
\end{cor}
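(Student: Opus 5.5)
The corollary is the special case $B=I$ of the preceding product inequality, so the plan is simply to specialize \eqref{EQN3}. I would check two things. First, the commutation hypothesis $|A|B=B^*|A|$ of Theorem \ref{th1}: with $B=I$ it reads $|A|=|A|$, which holds trivially. Second, the factor $r(B)$: since $r(I)=1$, inequality \eqref{EQN3} becomes exactly
\[
w(A)\le \frac{1}{\sqrt{2}}\,w_e\left(|A|^{2t},|A^*|^{2(1-t)}\right),\qquad t\in[0,1],
\]
which is \eqref{eqn1}.

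For safety, I would also give the direct argument, bypassing the general theorem. Fix $x\in\mathcal{H}$ with $\|x\|=1$. I apply Lemma \ref{lem2} with $B=I$ and $f(\lambda)=\lambda^t$, $g(\lambda)=\lambda^{1-t}$; these are continuous and nonnegative with $f(\lambda)g(\lambda)=\lambda$ on $[0,\infty)$, including the endpoints $t=0,1$ under the convention $\lambda^0=1$. This gives
\[
|\langle Ax,x\rangle|\le \|\,|A|^t x\|\,\|\,|A^*|^{1-t}x\| = \langle |A|^{2t}x,x\rangle^{1/2}\langle |A^*|^{2(1-t)}x,x\rangle^{1/2}.
\]

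Next I use the elementary inequality $\sqrt{ab}\le \sqrt{(a^2+b^2)/2}$ for $a,b\ge 0$, which follows from $2ab\le a^2+b^2$. Taking $a=\langle |A|^{2t}x,x\rangle$ and $b=\langle |A^*|^{2(1-t)}x,x\rangle$, both nonnegative since the operators are positive, I get
\[
|\langle Ax,x\rangle|\le \frac{1}{\sqrt2}\left(|\langle |A|^{2t}x,x\rangle|^2+|\langle |A^*|^{2(1-t)}x,x\rangle|^2\right)^{1/2}\le \frac1{\sqrt2}\,w_e\left(|A|^{2t},|A^*|^{2(1-t)}\right).
\]
Taking the supremum over unit vectors $x$ then finishes the proof.

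There is no real obstacle here. The only points needing care are the endpoint cases $t\in\{0,1\}$, where one factor is the identity, and checking that Lemma \ref{lem2} still applies there; it does, since $f$ and $g$ remain continuous and nonnegative with $fg=\mathrm{id}$.
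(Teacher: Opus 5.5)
Your proposal is correct and follows the paper's route: the paper obtains this corollary by setting $B=I$ in the preceding product inequality, where $r(I)=1$ and the hypothesis $|A|B=B^*|A|$ holds trivially. Your direct argument, Lemma 2 with $B=I$ followed by $\sqrt{ab}\le\sqrt{(a^2+b^2)/2}$, is the paper's proof of that theorem specialized to $B=I$; it also correctly uses $g(|A^*|)$ where the paper's displayed proof mistakenly writes $f(|A^*|)$.
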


\begin{remark}\label{REM1}
    Using Lemma \ref{lem3-}, we see that 
     \begin{eqnarray*}
            w(A) \leq \frac{1}{\sqrt{2}}w_e\left(|A|^{2t},|A^*|^{2(1-t)}\right) \leq \frac{1}{\sqrt{2}} \left\| |A|^{4t}+ |A^*|^{4(1-t)} \right\|^{1/2},
    \end{eqnarray*}
    for all $t\in [0,1].$ In particular, for $t=\frac12,$
 \begin{eqnarray*}
            w^2(A) \leq \frac{1}{{2}}w_e^2\left(|A|^{},|A^*|^{}\right) \leq \frac{1}{{2}} \left\| |A|^{2}+ |A^*|^{2} \right\|^{}.
    \end{eqnarray*}
    Therefore, Theorem \ref{theorem4} and Theorem \ref{th1} refine and generalize the well known second inequality in \eqref{Kittaneh_Eq_1}.
\end{remark}

Next we develop numerical radius inequalities for the sums of product operators involving $p$-numerical radius of $2$-tuples of operators.

 \begin{theorem}\label{THM1}
     Let $A_i, B_i, X_i \in \mathcal{B(\mathcal{H)}}$ for $i=1,2, \ldots,n$. Let $f$, $g$ be two non negative continuous functions on $[0,\infty)$ such that $f(t)g(t)=t$, for all $t\in[0,\infty)$. Then
     \begin{eqnarray*}
w^p\left(\sum_{i=1}^{n}A_i^*X_iB_i\right)\leq\frac{n^{p-1}}{2}\left(\sum_{i=1}^{n}w_p^p(B_i^*f^2(|X_i|)B_i,A_i^*g^2(|X_i^*|)A_i)\right),
\end{eqnarray*}
where $p\geq 1.$
 \end{theorem}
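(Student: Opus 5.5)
Fix $x\in\mathcal{H}$ with $\|x\|=1$; the plan is to bound $\left|\left\langle \sum_{i=1}^n A_i^*X_iB_i x,x\right\rangle\right|^p$ pointwise and take the supremum at the end. By the triangle inequality,
\[
\left|\left\langle \sum_{i=1}^{n}A_i^*X_iB_ix,x\right\rangle\right| \le \sum_{i=1}^n |\langle X_iB_ix,A_ix\rangle| .
\]
Apply Lemma \ref{lem2} (with $B=I$) to $X_i$ and the vectors $B_ix$, $A_ix$. This gives
\[
|\langle X_iB_ix,A_ix\rangle|\le \|f(|X_i|)B_ix\|\,\|g(|X_i^*|)A_ix\| = \langle B_i^*f^2(|X_i|)B_ix,x\rangle^{1/2}\langle A_i^*g^2(|X_i^*|)A_ix,x\rangle^{1/2}.
\]

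Next, use the scalar AM-GM in the form $\sqrt{ab}\le \left(\frac{a^p+b^p}{2}\right)^{1/p}$ for $a,b\ge 0$, $p\ge1$. This follows from $\sqrt{ab}\le \frac{a+b}{2}$ combined with convexity of $t\mapsto t^p$, i.e.\ Lemma \ref{lem4} with $n=2$. Set $a_i=\langle B_i^*f^2(|X_i|)B_ix,x\rangle$ and $b_i=\langle A_i^*g^2(|X_i^*|)A_ix,x\rangle$, which are nonnegative since the operators are positive. Then
\[
\left|\left\langle \sum_{i} A_i^*X_iB_ix,x\right\rangle\right|^p \le \Big(\sum_{i=1}^n \sqrt{a_ib_i}\Big)^p \le n^{p-1}\sum_{i=1}^n (a_ib_i)^{p/2},
\]
where the last step is Bohr's inequality (Lemma \ref{lem4}). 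If some terms vanish, the inequality is trivial or follows by continuity.

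Each term satisfies $(a_ib_i)^{p/2}\le \frac{a_i^p+b_i^p}{2}$. Since $a_i^p+b_i^p$ is exactly the quantity inside the definition of $w_p^p$ evaluated at $x$, it is bounded by $w_p^p\left(B_i^*f^2(|X_i|)B_i,\,A_i^*g^2(|X_i^*|)A_i\right)$. Taking the supremum over unit $x$ on the left then yields the stated inequality.

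I expect no step to be a real obstacle. The only points needing care are that Lemma \ref{lem2} is applied with different vectors $B_ix$ and $A_ix$ rather than with $x$ itself, and that the exponents are ordered correctly: use $\sqrt{a_ib_i}\le\left(\frac{a_i^p+b_i^p}{2}\right)^{1/p}$, rather than applying AM-GM before raising to the power $p$, so that the constant comes out as $\frac{n^{p-1}}{2}$.
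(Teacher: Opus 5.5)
Your proposal is correct and is essentially the paper's proof: the triangle inequality, Bohr's inequality (Lemma \ref{lem4}), Lemma \ref{lem2} applied to $X_i$ with the vectors $B_ix$ and $A_ix$, then $(a_ib_i)^{p/2}\le \frac{a_i^p+b_i^p}{2}$ and a termwise bound by $w_p^p$ before taking the supremum. The only difference is that you apply Lemma \ref{lem2} before Bohr rather than after, which changes nothing. Your closing caution about ordering is also unnecessary: applying $\sqrt{a_ib_i}\le\frac{a_i+b_i}{2}$ first and then using convexity of $t\mapsto t^p$ gives the same constant $\frac{n^{p-1}}{2}$.
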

 
 \begin{proof} Take $x\in \mathcal{H}$ with $\|x\|=1.$ We have
 \begin{eqnarray*}
\left|\left\langle\left(\sum_{i=1}^{n}A_i^*X_iB_i\right)x,x \right\rangle \right|^p&\leq&\left(\sum_{i=1}^{n}|\left\langle (A_i^*X_iB_i)x,x\right\rangle|\right)^p \\
     &\leq& n^{p-1}\left(\sum_{i=1}^{n}|\langle(A_i^*X_iB_i)x,x\rangle|^p\right)\\
     &=&n^{p-1}\left(\sum_{i=1}^{n}|\langle(X_iB_i)x,A_ix\rangle|^p\right)\\
     &\leq& n^{r-1}\left(\sum_{i=1}^{n}\|f(|X_i|)\|B_ix\|^r\|g(|X_i^*|)A_ix\|^p\right) \quad (\textit{by Lemma \ref{lem2}})\\
    &=&n^{p-1}\left(\sum_{i=1}^{n}\langle f^2(|X_i|)B_ix,B_ix\rangle^\frac{p}{2}\langle g^2(|X_i^*|)A_ix,A_ix\rangle^\frac{p}{2} \right)\\
     &\leq&\frac{n^{p-1}}{2}\left(\sum_{i=1}^{n}\langle B_i^*f^2(|X_i|)B_ix,x\rangle^p+\langle A_i^*g^2(|X_i^*|)A_ix,x\rangle^p \right).\\
\end{eqnarray*}
      Taking the supremum over $\|x\|=1$, we get the desired inequality.      
      \end{proof}
      
    \begin{remark}\label{remark2}
    We see that
    \begin{eqnarray*} &&w^r\left(\sum_{i=1}^{n}A_i^*X_iB_i\right)\\&\leq&\frac{n^{p-1}}{2}\left(\sum_{i=1}^{n}w_p^p(B_i^*f^2(|X_i|)B_i,A_i^*g^2(|X_i^*|)A_i)\right)\\
    &=&\frac{n^{p-1}}{2}\sup_{\|x\|=1}\sum_{i=1}^{n} \left(\langle B_i^*f^2(|X_i|)B_ix,x\rangle^p+\langle A_i^*g^2(|X_i^*|)A_ix,x\rangle^p \right)\\ 
    &\leq&
    \frac{n^{p-1}}{2}\sup_{\|x\|=1}\sum_{i=1}^{n} \left(\langle [B_i^*f^2(|X_i|)B_i]^px,x\rangle+\langle [A_i^*g^2(|X_i^*|)A_i]^p x,x\rangle \right) (\textit{by Lemma \ref{lem3-}})\\
    &=&
    \frac{n^{p-1}}{2}\sup_{\|x\|=1}\left(\sum_{i=1}^{n}\langle [B_i^*f^2(|X_i|)B_i]^px,x\rangle+ \sum_{i=1}^{n}\langle [A_i^*g^2(|X_i^*|)A_i]^p x,x\rangle \right) \\
     &\leq&
    \frac{n^{p-1}}{\sqrt{2}}\sup_{\|x\|=1}\left( \left | \sum_{i=1}^{n}  \langle [B_i^*f^2(|X_i|)B_i]^px,x\rangle+ i \sum_{i=1}^{n} \langle [A_i^*g^2(|X_i^*|)A_i]^p x,x\rangle\right| \right) \\
      &\leq&\frac{n^{p-1}}{\sqrt2}w \left(\sum_{i=1}^{n} [B_i^*f^2(|X_i|)B_i]^p+i [A_i^*g^2(|X_i^*|)A_i]^p\right), 
     \end{eqnarray*}
     where the third inequality follows by the inequality $|a+b|\leq \sqrt{2}|a+ib|$ for $a, b\in \mathbb{R}.$
     Hence, Theorem \ref{THM1} refines \cite[Theorem 2.11]{Bhunia-BSM}, namely,
     \begin{eqnarray*} w^r\left(\sum_{i=1}^{n}A_i^*X_iB_i\right)\leq \frac{n^{p-1}}{\sqrt2}w \left(\sum_{i=1}^{n} [B_i^*f^2(|X_i|)B_i]^p+i [A_i^*g^2(|X_i^*|)A_i]^p\right) .
     \end{eqnarray*}
     \end{remark}
  
 Setting $f(\lambda)=\lambda^{t}$ and $g(\lambda)=\lambda^{1-t},\, t\in [0,1]$ in Theorem \ref{THM1}, we obtain the following corollary.

 \begin{cor}\label{cor3-}
 Let $A_i, B_i, X_i \in \mathcal{B(\mathcal{H)}}$ for $i=1,2,\ldots,n$. Then
 \begin{eqnarray*}
  w^p\left(\sum_{i=1}^{n}A_i^*X_iB_i\right)\leq\frac{n^{p-1}}{{2}}\left(\sum_{i=1}^{n}w_p^p(B_i^*|X_i|^{2t}B_i,A_i^*|X_i^*|^{2(1-t)}A_i)\right),
     \end{eqnarray*}
 for all $p\geq1$.
     \end{cor}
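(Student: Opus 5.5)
This is a direct specialization of Theorem \ref{THM1}. We take $f(\lambda)=\lambda^{t}$ and $g(\lambda)=\lambda^{1-t}$ with $t\in[0,1]$, and check the hypotheses of Theorem \ref{THM1}.

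\textbf{Hypotheses.} Both functions are nonnegative and continuous on $[0,\infty)$. At the endpoints we use the convention $\lambda^0=1$, so $f\equiv 1$ when $t=0$ and $g\equiv1$ when $t=1$. Their product is $f(\lambda)g(\lambda)=\lambda^{t}\lambda^{1-t}=\lambda$ for all $\lambda\ge 0$, including $\lambda=0$.

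\textbf{Functional calculus.} For each $i$, the continuous functional calculus for the positive operators $|X_i|$ and $|X_i^*|$ gives
\[
f^2(|X_i|)=|X_i|^{2t}, \qquad g^2(|X_i^*|)=|X_i^*|^{2(1-t)}.
\]
Substituting these into the right-hand side of Theorem \ref{THM1} yields exactly
\[
\frac{n^{p-1}}{2}\sum_{i=1}^n w_p^p\bigl(B_i^*|X_i|^{2t}B_i,\;A_i^*|X_i^*|^{2(1-t)}A_i\bigr),
\]
valid for every $p\ge1$. Since $t\in[0,1]$ was arbitrary, the bound holds for all $t\in[0,1]$.

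\textbf{Possible gap.} There is no real obstacle, but one step deserves care: the inner estimate in the proof of Theorem \ref{THM1} as written. The line invoking Lemma \ref{lem2} contains typos, with exponent $r$ in place of $p$ and misplaced norms. The correct form is
\[
|\langle X_iB_ix,A_ix\rangle|^p\le \|f(|X_i|)B_ix\|^p\,\|g(|X_i^*|)A_ix\|^p .
\]
From there, applying AM--GM in the form $a^{p/2}b^{p/2}\le \tfrac12(a^p+b^p)$ gives the final line of that proof. The resulting pair of terms is bounded by the supremum defining $w_p^p$, since the $p$-numerical radius of the tuple dominates each individual sum $\langle\cdot\,x,x\rangle^p+\langle\cdot\,x,x\rangle^p$; this holds because those inner products are nonnegative.

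Once that chain is confirmed, the corollary follows by substitution alone.
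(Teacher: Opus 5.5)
Your proposal is correct and is essentially the paper's argument: the corollary comes from substituting $f(\lambda)=\lambda^{t}$, $g(\lambda)=\lambda^{1-t}$ ($t\in[0,1]$) into Theorem~\ref{THM1}, so that $f^2(|X_i|)=|X_i|^{2t}$ and $g^2(|X_i^*|)=|X_i^*|^{2(1-t)}$. You also correctly read the typo-ridden Lemma~\ref{lem2} line in that theorem's proof as $n^{p-1}\sum_i\|f(|X_i|)B_ix\|^p\|g(|X_i^*|)A_ix\|^p$, and the rest of the chain (AM--GM, then $\sup\sum\le\sum\sup$) goes through as you describe.
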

     
Again, setting $A_i, B_i=I$ in Theorem \ref{THM1}, we derive the following bound for the sums of operators.

 \begin{cor} \label{cor4-}
Let $X_i \in \mathcal{B(\mathcal{H)}}$ for $i=1,2,\ldots,n$. Let $f$, $g$ be two non negative continuous functions on $[0,\infty)$ so that $f(t)g(t)=t$ for all $t\in[0,\infty)$. Then
     \begin{eqnarray*}
         w^p\left(\sum_{i=1}^{n}X_i\right)\leq\frac{n^{p-1}}{2}\left(\sum_{i=1}^{n}w_p^p( f^2(|X_i|),g^2(|X_i^*|))\right).
\end{eqnarray*}
 \end{cor}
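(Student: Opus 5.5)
The plan is to obtain Corollary \ref{cor4-} as a direct specialization of Theorem \ref{THM1}, taking $A_i=B_i=I$ for every $i=1,\ldots,n$.

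With this choice, each product collapses: $A_i^*X_iB_i=X_i$, $B_i^*f^2(|X_i|)B_i=f^2(|X_i|)$ and $A_i^*g^2(|X_i^*|)A_i=g^2(|X_i^*|)$. Substituting these into the inequality of Theorem \ref{THM1} gives exactly
\[
w^p\left(\sum_{i=1}^{n}X_i\right)\leq\frac{n^{p-1}}{2}\sum_{i=1}^{n}w_p^p\left(f^2(|X_i|),g^2(|X_i^*|)\right),
\]
valid for all $p\geq 1$.

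It is also easy to check the inequality directly, following the proof of Theorem \ref{THM1}. Fix a unit vector $x$.
\begin{enumerate}
\item Apply the triangle inequality to $\left|\left\langle \sum_i X_i x,x\right\rangle\right|$.
\item Apply Bohr's inequality, Lemma \ref{lem4}, with exponent $p$.
\item Bound each term by Lemma \ref{lem2} with $B=I$: $|\langle X_ix,x\rangle|\leq \langle f^2(|X_i|)x,x\rangle^{1/2}\langle g^2(|X_i^*|)x,x\rangle^{1/2}$.
\item Raise to the power $p$ and use $\sqrt{ab}\leq \frac{a+b}{2}$ on $a=\langle f^2(|X_i|)x,x\rangle^p$ and $b=\langle g^2(|X_i^*|)x,x\rangle^p$.
\item Bound each resulting sum by $w_p^p\left(f^2(|X_i|),g^2(|X_i^*|)\right)$, then take the supremum over unit vectors $x$.
\end{enumerate}

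The only point needing care is the last step. The supremum of a sum is at most the sum of the suprema, and this is precisely how $\sum_i w_p^p$ appears on the right-hand side. Beyond that there is no real obstacle, since the statement is a direct specialization of Theorem \ref{THM1}.
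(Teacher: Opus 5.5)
Your proposal is correct and follows the paper's route exactly: the corollary is obtained by setting $A_i=B_i=I$ in Theorem~\ref{THM1}. Your direct check reproduces that theorem's proof in this special case: triangle inequality, Bohr's inequality, Lemma~\ref{lem2} with $B=I$, AM--GM on the $p$-th powers, and finally bounding the supremum of a sum by the sum of suprema.
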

 
 Setting $n=1$  and $f(t)=g(t)=t^{\frac{1}{2}}$ in Corollary \ref{cor4-}, we get the following upper bound.
 \begin{cor}\label{cor5-}
 Let $A\in\mathcal{B(\mathcal{H})}$. Then
 \begin{eqnarray*}
         w\left(A\right)\leq\frac{1}{2^{1/p}}w_p(|A|,|A^*|),
         \end{eqnarray*}
         for all $p\geq 1.$
 \end{cor}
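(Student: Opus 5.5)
The plan is to specialize Corollary \ref{cor4-} to $n=1$, $X_1=A$ and $f(t)=g(t)=t^{1/2}$. This gives $f^2(|A|)=|A|$ and $g^2(|A^*|)=|A^*|$, so the corollary reads
\[
w^p(A)\leq \frac{1}{2}\, w_p^p(|A|,|A^*|), \qquad p\geq 1 .
\]
Since $n^{p-1}=1$, Bohr's inequality (Lemma \ref{lem4}) plays no role here. Taking $p$-th roots, which is monotone on $[0,\infty)$, yields $w(A)\leq 2^{-1/p}w_p(|A|,|A^*|)$, as claimed.

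To keep the argument self-contained, I would also check the chain directly for a unit vector $x$. The first step is the mixed Schwarz inequality (Lemma \ref{lem2} with $f=g=t^{1/2}$):
\[
|\langle Ax,x\rangle|\leq \langle |A|x,x\rangle^{1/2}\langle |A^*|x,x\rangle^{1/2}.
\]
Raising both sides to the $p$-th power and applying the scalar AM--GM inequality $\sqrt{ab}\leq \frac{a+b}{2}$ with $a=\langle |A|x,x\rangle^p$ and $b=\langle |A^*|x,x\rangle^p$ gives
\[
|\langle Ax,x\rangle|^p\leq \tfrac12\left(\langle |A|x,x\rangle^p+\langle |A^*|x,x\rangle^p\right).
\]
Both inner products are nonnegative because $|A|$ and $|A^*|$ are positive, so they equal their absolute values. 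Taking the supremum over $\|x\|=1$ then gives $w^p(A)\leq \frac12 w_p^p(|A|,|A^*|)$, and we take $p$-th roots.

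I see no real obstacle. The one point needing care is that the general proof of Theorem \ref{THM1} contains a stray exponent $r$ at one step; in the case $n=1$ that exponent should simply be $p$, and the direct computation above confirms this.
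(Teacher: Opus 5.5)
Your proposal is correct and matches the paper's argument, which simply specializes Corollary \ref{cor4-} to $n=1$ and $f(t)=g(t)=t^{1/2}$. Your direct check via the mixed Schwarz inequality followed by AM--GM is exactly the $n=1$ case of the proof of Theorem \ref{THM1}, and you are right that the exponent $r$ appearing there should be $p$.
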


\begin{remark}
   Using the inequality $|a+b|\leq \sqrt{2}|a+ib |$ for any $a,b \in \mathbb{R}$, we see that 
    \begin{eqnarray*}
         w^p\left(A\right) &\leq& \frac{1}{2}w_p^p(|A|,|A^*|)\\
&=& \frac12 \sup_{\|x\|=1} \left( \langle |A|x,x \rangle^p+ \langle |A^*|x,x\rangle^p\right)\\
&\leq & \frac12 \sup_{\|x\|=1} \left( \langle |A|^px,x \rangle+ \langle |A^*|^px,x\rangle\right) \quad (\text{by Lemma \ref{lem3-}})\\
&\leq & \frac1{\sqrt{2}} \sup_{\|x\|=1} \left| \langle |A|^px,x \rangle+ i \langle |A^*|^px,x\rangle\right|\\
         &\leq& \frac{1}{\sqrt{2}}w \left( |A|^p+i|A^*|^p \right)^{}, \quad \text{ for all $p\geq 1.$ }
         \end{eqnarray*}
        Therefore, Corollary \ref{cor5-} refines and generalizes
         $w\left(A\right) \leq \frac{1}{\sqrt{2}}w \left( |A|+i|A^*| \right)^{} $, which was given in \cite[Corollary 2.15]{Bhunia-BSM}.
       \end{remark}


\section{Lower bounds for the numerical radius}

Using the Cartesian decomposition, we obtain in this section lower bounds for the numerical radius of bounded linear operators, which refine the existing lower bounds in \eqref{Eq_Intro_Equiv} and \eqref{Kittaneh_Eq_1}. To present these bounds, we first note the following inequality for nonzero vectors $x,y$ in a normed linear space $\mathcal{X}=(\mathcal{X}, \| \cdot \|),$ given by Maligranda \cite{Mali}:
 \begin{eqnarray}\label{p0}
     \|x+y\| \leq \|x\|+\|y\|-\left( 2- \left\|\frac{x}{\|x\|}+\frac{y}{\|y\|} \right\|\right)\min \left(\|x\|, \|y\| \right).
 \end{eqnarray}
 By applying Maligranda's inequality, we obtain lower bounds. The first bound is as follows, which improves $  \frac{1}{2}\|A\| \leq w(A)$.

\begin{prop}\label{prop1}
    If $A\in \mathcal{B}(\mathcal{H})$ with $Re(A)\pm Im(A) \neq 0$, then
    \begin{eqnarray*}\label{}
        \frac{1}{2}\|A\|+ \mu(A) &\leq & w(A),
    \end{eqnarray*}
    where $\mu(A)=\frac1{2\sqrt{2}} \left(2-\left\|\frac{Re(A)+Im(A)}{\| Re(A)+Im(A)\|}+ i \frac{Re(A)-Im(A)}{\| Re(A)-Im(A)\|} \right\| \right) \min \left( \| Re(A) \pm Im(A)\| \right).$ 
    \end{prop}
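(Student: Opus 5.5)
Apply Maligranda's inequality \eqref{p0} in the normed space $(\mathcal{B}(\mathcal{H}),\|\cdot\|)$ with
\[
x=Re(A)+Im(A),\qquad y=i\bigl(Re(A)-Im(A)\bigr).
\]
Both are nonzero by the hypothesis $Re(A)\pm Im(A)\neq 0$, and $\|y\|=\|Re(A)-Im(A)\|$. The sum is
\[
x+y=(1+i)Re(A)+(1-i)Im(A)=(1+i)\bigl(Re(A)-i\,Im(A)\bigr)=(1+i)A^*,
\]
so $\|x+y\|=\sqrt{2}\,\|A\|$. Since $\frac{y}{\|y\|}=i\,\frac{Re(A)-Im(A)}{\|Re(A)-Im(A)\|}$, the bracket in \eqref{p0} is exactly the bracket appearing in $\mu(A)$. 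Hence \eqref{p0} gives
\[
\sqrt{2}\,\|A\|\leq \|Re(A)+Im(A)\|+\|Re(A)-Im(A)\|-2\sqrt{2}\,\mu(A).
\]

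Next, bound each of $\|Re(A)\pm Im(A)\|$ by $\sqrt{2}\,w(A)$. Note that $\frac{1}{\sqrt{2}}(Re(A)\pm Im(A))$ is self-adjoint and equals $Re(e^{i\theta}A)$ for a suitable $\theta$:
\[
Re(e^{i\theta}A)=\cos\theta\,Re(A)-\sin\theta\,Im(A),
\]
so take $\theta=\mp\pi/4$. Then use $\|Re(e^{i\theta}A)\|\le w(A)$, the identity already used in the proof of Theorem \ref{th1}. Substituting gives
\[
\sqrt{2}\,\|A\|\le 2\sqrt{2}\,w(A)-2\sqrt{2}\,\mu(A).
\]
Dividing by $2\sqrt{2}$ yields $\frac12\|A\|+\mu(A)\le w(A)$.

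The main care point is the choice of $x$ and $y$. The factor $i$ on the second vector must be placed so that the sum collapses to a unimodular multiple of $A$ or $A^*$, whose norm is $\|A\|$. If instead we took $y=-i(Re(A)-Im(A))$, the sum would be $(1-i)A$, which also works but changes the sign inside the bracket. To match the stated $\mu(A)$, I would check that the signs agree with $+i$ as written. Everything else is routine.
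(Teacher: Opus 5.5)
Your proof is correct and is essentially the paper's argument. The paper applies Maligranda's inequality \eqref{p0} to $x=\frac{1}{\sqrt{2}}(Re(A)+Im(A))$ and $y=\frac{i}{\sqrt{2}}(Re(A)-Im(A))$, which are your vectors scaled by $1/\sqrt{2}$ (immaterial by homogeneity), together with $w(A)\geq \frac{1}{\sqrt{2}}\max\{\|Re(A)+Im(A)\|,\|Re(A)-Im(A)\|\}$; it cites that bound from \cite{Bhunia-RMJM}, whereas you derive it directly from $Re(e^{\mp i\pi/4}A)$ and also verify explicitly that $x+y=(1+i)A^*$, a step the paper leaves implicit. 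Your worry about the sign of $i$ is moot, since the two candidate brackets are adjoints of each other and therefore have equal norm.
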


\begin{proof}
Following \cite[Theorem 2.1]{Bhunia-RMJM}, we obtain that $$w(A) \geq \frac1{\sqrt{2}} \max \left\{ \| Re(A) + Im(A)\|, \| Re(A) - Im(A)\| \right\}.$$
Setting the normed linear space $\mathcal{X}=(\mathcal{B}(\mathcal{H}), \|\cdot\|)$ and the vectors $x=\frac{1}{\sqrt{2}}(Re(A)+Im(A))$ and $y=\frac{i}{\sqrt{2}} (Re(A)-Im(A))$ in \eqref{p0}, and using the above inequality, we obtain the desired bound.
\end{proof}

\begin{remark}
    Since $\mu(A)\geq 0$ for every $A\in \mathcal{B}(\mathcal{H})$, Proposition \ref{prop1} refines the classical numerical radius bound $\frac{1}{2}\|A\|\leq w(A).$
To show the proper refinement, we consider a simple example. Suppose $A=\begin{pmatrix}
    1&0\\
    0&i
\end{pmatrix}.$ We see that  \begin{eqnarray*}\label{}
    \frac{1}{2}\|A\|=\frac12< \frac{1}{\sqrt{2}} =    \frac{1}{2}\|A\|+ \mu(A) &< & 1=w(A).
    \end{eqnarray*}
\end{remark}

Following \cite[Remark 2.2]{Bhunia-RMJM}, we have if $w(A)=\frac{1}{2}\|A\|,$ then $\|Re(A)+ Im(A)\|=\|Re(A)- Im(A)\|.$ Using this and Proposition \ref{prop1}, we deduce a necessary condition for the equality $w(A)=\frac{1}{2}\|A\|.$

\begin{cor}
    Suppose $A\in \mathcal{B}(\mathcal{H})$ with $\|Re(A)\pm Im(A)\|\neq 0$. If $w(A)=\frac{1}{2}\|A\|,$ then $$\|Re(A)\pm Im(A)\|=\frac{1}{\sqrt{2}}\|A\|.$$ 
\end{cor}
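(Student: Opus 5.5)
Assume $w(A)=\frac12\|A\|$. Write $a=\|Re(A)+Im(A)\|$ and $b=\|Re(A)-Im(A)\|$, both nonzero by hypothesis. The plan is to squeeze the common value of $a$ and $b$ between two estimates that force it to equal $\frac{1}{\sqrt{2}}\|A\|$.

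First I invoke \cite[Remark 2.2]{Bhunia-RMJM}, as quoted just before the statement. It says that the equality $w(A)=\frac12\|A\|$ forces $a=b$.

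Next I combine Proposition \ref{prop1} with the equality. Since $\mu(A)\ge 0$, we get
\[
\tfrac12\|A\|\le \tfrac12\|A\|+\mu(A)\le w(A)=\tfrac12\|A\|,
\]
so $\mu(A)=0$. In addition, the bound from \cite[Theorem 2.1]{Bhunia-RMJM} used in the proof of Proposition \ref{prop1} gives
\[
\tfrac{1}{\sqrt2}\max\{a,b\}\le w(A)=\tfrac12\|A\|.
\]
With $a=b$, this yields the upper estimate $a\le \frac{1}{\sqrt2}\|A\|$.

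For the reverse inequality I use the triangle inequality on the identity
\[
\sqrt2\,A=\tfrac{1}{\sqrt2}\big(Re(A)+Im(A)\big)+\tfrac{i}{\sqrt2}\big(Re(A)-Im(A)\big)\cdot(1-i)\cdot\tfrac{1}{\sqrt2}\cdot\ldots
\]
It is cleaner to work with $A=Re(A)+iIm(A)$ directly. Writing $Re(A)=\frac12(X+Y)$ and $Im(A)=\frac12(X-Y)$ with $X=Re(A)+Im(A)$ and $Y=Re(A)-Im(A)$, we get
\[
A=\tfrac{1+i}{2}X+\tfrac{1-i}{2}Y .
\]
Since $\bigl|\frac{1\pm i}{2}\bigr|=\frac{1}{\sqrt2}$, this gives $\|A\|\le \frac{1}{\sqrt2}(a+b)=\sqrt2\,a$. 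Hence $a\ge \frac{1}{\sqrt2}\|A\|$, which matches the upper estimate. Therefore $a=b=\frac1{\sqrt2}\|A\|$.

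The only step needing care is the last one, namely getting the algebraic identity expressing $A$ in terms of $Re(A)\pm Im(A)$ right. This is the same decomposition that underlies the choice of $x$ and $y$ in the proof of Proposition \ref{prop1}. Everything else is a direct substitution into the quoted results.
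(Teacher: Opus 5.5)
Your proof is correct. It differs from the paper only in how the final equality is read off. The paper combines $a=b$ from \cite[Remark 2.2]{Bhunia-RMJM} with Proposition \ref{prop1}. The equality $w(A)=\frac12\|A\|$ forces $\mu(A)=0$, that is, $\bigl\|\frac{X}{a}+i\frac{Y}{b}\bigr\|=2$. With $a=b$ and the identity $X+iY=(1+i)A^*$, this becomes $2a=\sqrt2\,\|A\|$.

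You derive $\mu(A)=0$ but never use it. Instead you squeeze $a$ between two bounds. One is the plain triangle inequality applied to $A=\frac{1+i}{2}X+\frac{1-i}{2}Y$. The other is the lower bound $w(A)\ge\frac{1}{\sqrt2}\max\{a,b\}$, which also underlies Proposition \ref{prop1}. Your route needs no Maligranda inequality.

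It is also self-contained, because the chain
\[
\|A\|\le \tfrac{1}{\sqrt2}(a+b)\le \sqrt2\max\{a,b\}\le 2w(A)=\|A\|
\]
must hold with equality throughout. This gives $a=b=\frac{1}{\sqrt2}\|A\|$ directly. You therefore need neither \cite[Remark 2.2]{Bhunia-RMJM} nor the hypothesis $\|Re(A)\pm Im(A)\|\neq 0$, which the paper needs only to normalize the vectors in Maligranda's inequality. The paper's route, by contrast, presents the corollary as a consequence of the refinement $\mu(A)$.

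Two cleanups are needed. Delete the abandoned display for $\sqrt2\,A$, which trails off and is not a valid identity as written. Drop the unused step $\mu(A)=0$.
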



Similarly to Proposition \ref{prop1}, we also obtain the following numerical radius bound, which refines the bound $ \frac{1}{4}\|A^*A+AA^*\| 
         \leq w^2(A)$.

\begin{prop}\label{prop2}
    If $A\in \mathcal{B}(\mathcal{H})$ with $Re(A)\pm Im(A) \neq 0$, then
    \begin{eqnarray*}\label{}
        \frac{1}{4}\|A^*A+AA^*\| 
        + \nu(A) &\leq& w^2(A),
    \end{eqnarray*}
    where $\nu(A)=\frac1{4} \left(2-\left\|\frac{(Re(A)+Im(A))^2}{\| Re(A)+Im(A)\|^2}+ i \frac{(Re(A)-Im(A))^2}{\| Re(A)-Im(A)\|^2} \right\| \right) \min \left( \| Re(A) \pm Im(A)\|^2 \right).$
    \end{prop}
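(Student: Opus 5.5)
Write $P=Re(A)+Im(A)$ and $Q=Re(A)-Im(A)$; both are selfadjoint and nonzero by hypothesis. The plan mirrors the proof of Proposition \ref{prop1}: apply Maligranda's inequality \eqref{p0} in $\mathcal{X}=(\mathcal{B}(\mathcal{H}),\|\cdot\|)$, but now to the squared operators $x=\frac14 P^2$ and $y=\frac{i}{4}Q^2$, and combine it with a lower bound for $w^2(A)$ coming from \cite[Theorem 2.1]{Bhunia-RMJM}.

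First I identify $\|x+y\|$. Since $Re(A)=\frac12(P+Q)$ and $Im(A)=\frac12(P-Q)$, we get $P^2+Q^2=2(Re(A)^2+Im(A)^2)=A^*A+AA^*$. Both $P^2$ and $Q^2$ are positive, so for the self-adjoint pair $(P^2,Q^2)$ one has $\|P^2+iQ^2\|\ge \|P^2+Q^2\|/\sqrt2$. More usefully, I will compare directly: for unit $z$, $|\langle (P^2+iQ^2)z,z\rangle|^2=\langle P^2z,z\rangle^2+\langle Q^2z,z\rangle^2\ge\frac12\langle (P^2+Q^2)z,z\rangle^2$. Since $P^2+Q^2\ge0$, its norm equals its numerical radius, so $\|P^2+Q^2\|\le\sqrt2\, w(P^2+iQ^2)\le\sqrt2\|P^2+iQ^2\|$. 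This is not quite the direction needed, so instead I will bound $\|x+y\|$ from below by $\frac14\|A^*A+AA^*\|$ in the simplest way. Note $x+y$ has real part $\frac14P^2$ and imaginary part $\frac14Q^2$, both positive, so $\|x+y\|\ge w(x+y)\ge \sup_z \frac14\langle P^2z,z\rangle$, and similarly for $Q^2$. Hence $\|x+y\|\ge\frac14\max(\|P\|^2,\|Q\|^2)$. Separately, $\frac14\|A^*A+AA^*\|=\frac14\|P^2+Q^2\|\le \frac14(\|P\|^2+\|Q\|^2)=\|x\|+\|y\|$. So I will run Maligranda on $\|x\|+\|y\|$ differently: write \eqref{p0} as
\[
\|x\|+\|y\| \ge \|x+y\| + \Bigl(2-\Bigl\|\tfrac{x}{\|x\|}+\tfrac{y}{\|y\|}\Bigr\|\Bigr)\min(\|x\|,\|y\|).
\]
With $\tfrac{x}{\|x\|}=\tfrac{P^2}{\|P\|^2}$, $\tfrac{y}{\|y\|}=i\tfrac{Q^2}{\|Q\|^2}$ and $\min(\|x\|,\|y\|)=\frac14\min\|Re(A)\pm Im(A)\|^2$, the correction term is exactly $\nu(A)$.

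Next I need $w^2(A)\ge\frac12\max(\|P\|^2,\|Q\|^2)$, which is the square of the inequality $w(A)\ge\frac1{\sqrt2}\max\{\|P\|,\|Q\|\}$ already quoted from \cite[Theorem 2.1]{Bhunia-RMJM}. Then $\frac14(\|P\|^2+\|Q\|^2)\le\frac12\max(\|P\|^2,\|Q\|^2)\le w^2(A)$, i.e. $\|x\|+\|y\|\le w^2(A)$. Combining with the displayed Maligranda inequality gives $w^2(A)\ge \|x+y\|+\nu(A)$.

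The remaining and main obstacle is to show $\|x+y\|\ge\frac14\|A^*A+AA^*\|$, i.e. $\|P^2+iQ^2\|\ge\|P^2+Q^2\|$. For positive $P^2,Q^2$ and unit $z$, $|\langle (P^2+iQ^2)z,z\rangle|=\sqrt{a^2+b^2}$ with $a,b\ge0$, which is only $\ge (a+b)/\sqrt2$, so the naive estimate loses $\sqrt2$. My first attempt will be to check whether the paper's intended lower bound only needs $\frac14\|A^*A+AA^*\|\le\|x\|+\|y\|$ placed on the other side; concretely I will try reorganizing so that Maligranda is applied to the pair $x,y$ to bound $\|x+y\|$ above and then use $\frac14\|P^2+Q^2\|\le \|x+y\|$-type comparisons via $w$. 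If that fails, I fall back to replacing $\|x+y\|$ by $\frac1{4\sqrt2}\|A^*A+AA^*\|$, and then recover the stated constant by using instead $w^2(A)\ge\frac14\|A^*A+AA^*\|$ from \eqref{Kittaneh_Eq_1}, pinning down which normalization makes the chain close.
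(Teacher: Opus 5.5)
The step you flagged as the main obstacle cannot be completed, and no reorganization will rescue it, because the proposition as stated is false. The inequality you need, $\|P^2+iQ^2\|\ge\|P^2+Q^2\|$, actually goes the other way. For positive $S,T$ and unit vectors $u,v$,
\[
|\langle (S+iT)u,v\rangle|\le \langle Su,u\rangle^{1/2}\langle Sv,v\rangle^{1/2}+\langle Tu,u\rangle^{1/2}\langle Tv,v\rangle^{1/2}\le \langle (S+T)u,u\rangle^{1/2}\langle (S+T)v,v\rangle^{1/2}.
\]
Hence $\|S+iT\|\le\|S+T\|$, and the ratio is $1/\sqrt2$ when $S=T\neq 0$.

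This is fatal. Take $A=\begin{pmatrix}0&1\\0&0\end{pmatrix}$. Then $Re(A)\pm Im(A)\neq0$ and $(Re(A)\pm Im(A))^2=\frac12 I$. So $\nu(A)=\frac14(2-|1+i|)\cdot\frac12=\frac{2-\sqrt2}{8}>0$, whereas $\frac14\|A^*A+AA^*\|=\frac14=w^2(A)$. The claimed inequality would read $\frac14+\frac{2-\sqrt2}{8}\le\frac14$.

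The paper gives no argument beyond ``similarly to Proposition \ref{prop1}'', and the analogy breaks exactly where you got stuck. In Proposition \ref{prop1} the factor $i$ is harmless because $P+iQ=(1+i)A^*$. There is no identity tying $P^2+iQ^2$ to $A^*A+AA^*$.

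Your fallback does not close the gap either. From $w^2(A)\ge\frac{1}{4\sqrt2}\|A^*A+AA^*\|+\nu(A)$ and $w^2(A)\ge\frac14\|A^*A+AA^*\|$ you can only take the larger right-hand side; you cannot add the improvements.

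What your setup does prove correctly is twofold:
\begin{itemize}
\item With $y=\frac i4Q^2$ it gives the valid but different bound $\frac14\|P^2+iQ^2\|+\nu(A)\le w^2(A)$. This is an equality for the example above.
\item With $y=\frac14Q^2$ instead, $x+y=\frac14(A^*A+AA^*)$ exactly, and Maligranda's inequality yields the corrected statement
\[
\frac14\|A^*A+AA^*\|+\frac14\Bigl(2-\Bigl\|\frac{P^2}{\|P\|^2}+\frac{Q^2}{\|Q\|^2}\Bigr\|\Bigr)\min\left(\|P\|^2,\|Q\|^2\right)\le w^2(A).
\]
By the inequality above, its correction term is at most $\nu(A)$, and it vanishes for the example.
\end{itemize}

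The same defect affects $\delta(A)$ in \eqref{pintu-002}, and it propagates to Corollary \ref{cor7}. With the same $A$ and the unitary $B=\frac1{\sqrt2}\begin{pmatrix}1&-1\\1&1\end{pmatrix}$, one gets $w(AB+BA)=\sqrt2>2^{1/4}=2\sqrt2\|B\|\sqrt{w^2(A)-\nu(A)}$.
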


\begin{remark}
    Since $\nu(A)\geq 0$ for every $A\in \mathcal{B}(\mathcal{H})$, Proposition \ref{prop2} refines the numerical radius bound $\frac{1}{4}\|A^*A+AA^*\| \leq w^2(A),$ proved by Kittaneh \cite{Kittaneh2}.
    We consider a simple example to show proper refinement. Suppose $A=\begin{pmatrix}
    1&0\\
    0&i
\end{pmatrix}.$ We see that  \begin{eqnarray*}\label{}
    \frac{1}{4}\|A^*A+AA^*\| =\frac12< \frac12+\left(\frac12-\frac{1}{2\sqrt{2}}\right) =     \frac{1}{4}\|A^*A+AA^*\| 
        + \nu(A) <1= w^2(A).
    \end{eqnarray*}
    Recently, Jana et al. \cite{Jana} proved that 
    \begin{eqnarray}\label{pin}
        \frac{1}{4}\|A^*A+AA^*\|+\frac{\mu}{2}\max \{\| Re(A)\|, \| Im(A)\| \} \leq w^2(A),
    \end{eqnarray}
    where $\mu= \left|\| Re(A)+Im(A) \|- \| Re(A)-Im(A)\| \right|.$ 
    Considering the above example, we conclude that there are operators for which Proposition \ref{prop2} gives a stronger estimate than estimate \eqref{pin}.
    \end{remark}


Similarly to Proposition \ref{prop1}, we also obtain the following other lower bounds, which are also recently studied in  \cite{Bhunia-CMB}.  

\begin{cor}
    If $A\in \mathcal{B}(\mathcal{H})$ with $Re(A)\neq 0 $ and $ Im(A) \neq 0$, then
\begin{eqnarray}\label{pintu-001}
        \frac{1}{2}\|A\|+ \gamma(A) &\leq & w(A),
    \end{eqnarray}
    where $\gamma(A)=\frac12 \left(2-\left\|\frac{Re(A)}{\| Re(A)\|}+ i \frac{Im(A)}{\| Im(A)\|} \right\| \right) \min \left( \| Re(A)\|, \| Im(A)\| \right)$
    and 
\begin{eqnarray}\label{pintu-002}
       \frac{1}{4}\|A^*A+AA^*\| 
       + \delta(A) & \leq & w^2(A),
    \end{eqnarray}
    where $\delta(A)=\frac12 \left(2-\left\|\frac{Re^2(A)}{\| Re(A)\|^2}+ i \frac{Im^2(A)}{\| Im(A)\|^2} \right\| \right) \min \left( \| Re(A)\|^2, \| Im(A)\|^2 \right).$
\end{cor}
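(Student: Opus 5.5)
Both inequalities come from Maligranda's inequality \eqref{p0} in $\mathcal{X}=(\mathcal{B}(\mathcal{H}),\|\cdot\|)$, combined with the identity $Re(e^{i\theta}A)=\cos\theta\, Re(A)-\sin\theta\, Im(A)$ and the characterization $w(A)=\sup_\theta\|Re(e^{i\theta}A)\|$. In particular $w(A)\ge \|Re(A)\|$ and $w(A)\ge\|Im(A)\|$ (take $\theta=0$ and $\theta=-\pi/2$).

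\emph{First inequality \eqref{pintu-001}.} Apply \eqref{p0} with $x=Re(A)$ and $y=i\,Im(A)$, which are nonzero by hypothesis. Since $x+y=A$, $\|x\|=\|Re(A)\|$ and $\|y\|=\|Im(A)\|$, we get
\[
\|A\|\le \|Re(A)\|+\|Im(A)\|-2\gamma(A).
\]
Bound each of $\|Re(A)\|$ and $\|Im(A)\|$ by $w(A)$, so the right side is at most $2w(A)-2\gamma(A)$. Dividing by $2$ gives \eqref{pintu-001}.

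\emph{Second inequality \eqref{pintu-002}.} Apply \eqref{p0} with $x=Re^2(A)$ and $y=i\,Im^2(A)$. These are nonzero because $\|Re^2(A)\|=\|Re(A)\|^2$ for self-adjoint $Re(A)$, and likewise for $Im(A)$. This also shows that the normalized vectors in \eqref{p0} are exactly those appearing in $\delta(A)$, and that $\min(\|x\|,\|y\|)=\min(\|Re(A)\|^2,\|Im(A)\|^2)$. The result is
\[
\|Re^2(A)+i\,Im^2(A)\|\le \|Re(A)\|^2+\|Im(A)\|^2-2\delta(A)\le 2w^2(A)-2\delta(A).
\]

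It then remains to relate the left side to $\tfrac12\|A^*A+AA^*\|$. A direct computation gives
\[
A^*A+AA^*=2\bigl(Re^2(A)+Im^2(A)\bigr).
\]
So we need
\[
\|Re^2(A)+Im^2(A)\|\le\|Re^2(A)+i\,Im^2(A)\|.
\]
This holds because, for positive $P,Q$ and a unit vector $z$,
\[
\langle (P+Q)z,z\rangle\le \sqrt2\,\bigl|\langle Pz,z\rangle+i\langle Qz,z\rangle\bigr|,
\]
but that yields only a factor $\sqrt2$, not $1$. This comparison is the main obstacle.

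If the sharper inequality fails, I would consider two ways round it. One is to apply \eqref{p0} instead to $x=Re^2(A)$ and $y=Im^2(A)$ with no factor $i$; then $x+y=\tfrac12(A^*A+AA^*)$ directly, at the cost of $\delta$ involving $\|\cdot+\cdot\|$ rather than $\|\cdot+i\,\cdot\|$. The other is to follow the paper's Proposition \ref{prop2} argument and mirror how it derives its $\nu(A)$ term. I would expect the paper to accept the $i$-version through the step $\|P+iQ\|\ge\|P+Q\|/\sqrt2$, absorbing the constant, or through the analogous step used in Proposition \ref{prop2}. I would therefore check that step first, and adjust the constant in $\delta(A)$ accordingly if needed.
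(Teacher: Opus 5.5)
Your proof of \eqref{pintu-001} is correct and is the intended argument: apply Maligranda's inequality \eqref{p0} with $x=Re(A)$ and $y=i\,Im(A)$, then use $\|Re(A)\|,\|Im(A)\|\le w(A)$.

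For \eqref{pintu-002} there is a genuine gap, and no route will close it, because \eqref{pintu-002} is false as stated. The comparison you flagged, $\|Re^2(A)+Im^2(A)\|\le\|Re^2(A)+i\,Im^2(A)\|$, fails, and the inequality fails with it. Take $A=\begin{pmatrix}0&1\\0&0\end{pmatrix}$. Then $Re^2(A)=Im^2(A)=\tfrac14 I$ and $\|Re(A)\|=\|Im(A)\|=\tfrac12$. So the operator inside $\delta(A)$ is $(1+i)I$, which has norm $\sqrt2$, and
\[
\delta(A)=\tfrac12\bigl(2-\sqrt2\bigr)\cdot\tfrac14=\tfrac{2-\sqrt2}{8}>0,
\qquad\text{while}\qquad
\tfrac14\|A^*A+AA^*\|=\tfrac14\|I\|=\tfrac14=w^2(A).
\]
Hence the left side of \eqref{pintu-002} strictly exceeds $w^2(A)$. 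In this example $\|Re^2(A)+Im^2(A)\|=\tfrac12$, whereas $\|Re^2(A)+i\,Im^2(A)\|=\tfrac{\sqrt2}{4}$.

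None of your other fallbacks rescues the stated inequality.
\begin{itemize}
\item Mirroring Proposition~\ref{prop2} does not help. For the same $A$, $(Re(A)\pm Im(A))^2=\tfrac12 I$, so $\nu(A)=\tfrac{2-\sqrt2}{8}>0$, even though Kittaneh's bound $\tfrac14\|A^*A+AA^*\|\le w^2(A)$ is already an equality. So Proposition~\ref{prop2} fails on this matrix too.
\item Absorbing the constant via $\|P+Q\|\le\sqrt2\,\|P+iQ\|$ (for positive $P,Q$) only proves the different inequality $\tfrac{1}{4\sqrt2}\|A^*A+AA^*\|+\delta(A)\le w^2(A)$.
\end{itemize}

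What is true is your first fallback. Apply \eqref{p0} with $x=Re^2(A)$ and $y=Im^2(A)$, and combine it with $A^*A+AA^*=2(Re^2(A)+Im^2(A))$ and $\|Re(A)\|^2+\|Im(A)\|^2\le 2w^2(A)$. This gives
\[
\tfrac14\|A^*A+AA^*\|+\delta'(A)\le w^2(A),
\]
where $\delta'(A)$ is $\delta(A)$ with the factor $i$ removed. In the example above $\delta'(A)=0$, as it must be.

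So your proposal establishes the first inequality of the corollary but not the second. The second should be reported as false and replaced by the $i$-free version, not "adjusted" to fit.
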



Next, applying Proposition \ref{prop2}, we obtain an upper bound for the numerical radius of the generalized commutators and anti-commutators of operators, which refines (and generalizes) the inequality $w(AB\pm BA) \leq 2\sqrt{2} w(A) \|B\|,$ given by Fong and Holbrook \cite{Fong}.  


\begin{cor}\label{cor7}
    Let $A,B,X,Y\in \mathcal{B}(\mathcal{H}).$ Then
    \begin{eqnarray*}
        w(AXB\pm BYA) &\leq& 2\sqrt{2} \max\{\|XB\|, \|BY\| \} \sqrt{w^2(A)-\nu(A)},
    \end{eqnarray*}
    
    where $\nu(A)$ is the same as Proposition \ref{prop2}. 
    
    In particular, for $X=Y=I$,
     \begin{eqnarray}\label{pi-1}
        w(AB\pm BA) &\leq& 2\sqrt{2} \|B\| \sqrt{w^2(A)-\nu(A)}.
    \end{eqnarray}

    Interchanging $A$ and $B$, we get
     \begin{eqnarray}
        w(AB\pm BA) &\leq& 2\sqrt{2} \|A\| \sqrt{w^2(B)-\nu(B)}.
    \end{eqnarray}
\end{cor}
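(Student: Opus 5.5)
The plan is to follow the Fong–Holbrook approach: reduce the generalized commutator to a norm estimate on a Hermitian piece of $A$. Then the lower bound of Proposition \ref{prop2}, rewritten as an upper bound on $\|Re(e^{i\theta}A)\|$, supplies the factor $\sqrt{w^2(A)-\nu(A)}$.

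The key observation comes from Proposition \ref{prop2}. For every $\theta\in\mathbb{R}$ we have $\|Re(e^{i\theta}A)\|^2\le \frac14\|A^*A+AA^*\|$, since
\[
\|Re(e^{i\theta}A)\|^2=\|Re(e^{i\theta}A)^2\|\le \tfrac14\big\|(e^{i\theta}A)^2+(e^{-i\theta}A^*)^2\big\|+\tfrac14\|A^*A+AA^*\|\cdot\text{(need care)}.
\]
A cleaner route is the standard identity
\[
Re^2(A)+Im^2(A)=\tfrac12(A^*A+AA^*),
\]
which gives $\max\{\|Re(A)\|^2,\|Im(A)\|^2\}\le \frac12\|A^*A+AA^*\|$. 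Combining this with Proposition \ref{prop2} yields
\[
\tfrac12\max\{\|Re(A)\|^2,\|Im(A)\|^2\}\le \tfrac14\|A^*A+AA^*\|\le w^2(A)-\nu(A).
\]
Hence $\max\{\|Re(A)\|,\|Im(A)\|\}\le \sqrt{2}\sqrt{w^2(A)-\nu(A)}$.

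Next I write $A=H+iK$ with $H=Re(A)$ and $K=Im(A)$. Then
\[
AXB\pm BYA=(HXB\pm BYH)+i(KXB\pm BYK).
\]
It therefore suffices to bound $w(HXB\pm BYH)\le 2\|H\|\max\{\|XB\|,\|BY\|\}$, and similarly for $K$. The naive triangle estimate gives $w(HXB\pm BYH)\le \|HXB\|+\|BYH\|\le \|H\|(\|XB\|+\|BY\|)\le 2\|H\|\max\{\|XB\|,\|BY\|\}$.

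Summing the $H$ and $K$ parts, however, gives $2(\|H\|+\|K\|)\max\{\dots\}$, which costs a factor $2$ instead of $\sqrt2$. This is the main obstacle. To avoid it, I would instead apply the estimate at each angle. The operator $e^{i\theta}(AXB\pm BYA)$ has real part $Re(e^{i\theta}(AXB\pm BYA))$, and I want to control it using only one Hermitian part of a rotated $A$. The Fong–Holbrook trick is to use $w(T)=\sup_\theta\|Re(e^{i\theta}T)\|$ together with the fact that $\nu$ and $w$ are invariant under $A\mapsto e^{i\phi}A$.

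If that symmetrization does not reduce cleanly, the fallback is to use Proposition \ref{prop2} directly with the bound
\[
\|H\|+\|K\|\le \sqrt2\,(\|H\|^2+\|K\|^2)^{1/2}\le\sqrt2\,\|A^*A+AA^*\|^{1/2}\le 2\sqrt2\sqrt{w^2(A)-\nu(A)},
\]
where the middle step uses $\|H\|^2+\|K\|^2\le \|H^2+K^2\|\cdot$(suitable constant). Tracking these constants so that exactly $2\sqrt2$ survives is the delicate step. The particular cases with $X=Y=I$, and with $A,B$ interchanged, then follow immediately.
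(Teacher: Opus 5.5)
\textbf{There is a genuine gap: your argument proves the inequality only with constant $4\sqrt2$, not $2\sqrt2$.}

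The Cartesian split gives $w(AXB\pm BYA)\le 2\max\{\|XB\|,\|BY\|\}(\|H\|+\|K\|)$. In your fallback the ``suitable constant'' is $2$, because $\|H\|^2,\|K\|^2\le\|H^2+K^2\|=\frac12\|A^*A+AA^*\|$. So the fallback yields $\|H\|+\|K\|\le 2\sqrt2\sqrt{w^2(A)-\nu(A)}$, and multiplying the two estimates gives $4\sqrt2$.

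This loss is built into the split; better bookkeeping cannot remove it. Take $A=\mathrm{diag}(1,i)$, so $H=\mathrm{diag}(1,0)$ and $K=\mathrm{diag}(0,1)$. Then $2(\|H\|+\|K\|)=4$, whereas $2\sqrt2\sqrt{w^2(A)-\nu(A)}\le 2\sqrt2\,w(A)=2\sqrt2<4$.

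The rotation idea is only gestured at, and it does not reduce the way you hope. For arbitrary $Z=XB$ and $W=BY$, the operator $Re\big(e^{i\theta}(AZ\pm WA)\big)$ involves both the real and the imaginary part of $e^{i\theta}A$, not a single Hermitian part.

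Your opening claim $\|Re(e^{i\theta}A)\|^2\le\frac14\|A^*A+AA^*\|$ for all $\theta$ is also false. Taking the supremum over $\theta$ would give $w^2(A)\le\frac14\|A^*A+AA^*\|$, which fails for $A=I$.

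\textbf{The missing idea is not to split $A$ at all.} Instead, estimate the quadratic form at a fixed unit vector $x$, so that $\|Ax\|$ and $\|A^*x\|$ are coupled at the same $x$. With $Z=XB$ and $W=BY$,
\[
|\langle (AZ\pm WA)x,x\rangle|\le |\langle Zx,A^*x\rangle|+|\langle Ax,W^*x\rangle|\le \max\{\|Z\|,\|W\|\}\,\big(\|A^*x\|+\|Ax\|\big),
\]
and
\[
\|Ax\|+\|A^*x\|\le \sqrt2\,\big(\|Ax\|^2+\|A^*x\|^2\big)^{1/2}=\sqrt2\,\langle (A^*A+AA^*)x,x\rangle^{1/2}\le \sqrt{2\|A^*A+AA^*\|}.
\]
Proposition \ref{prop2} gives $\|A^*A+AA^*\|\le 4\big(w^2(A)-\nu(A)\big)$, so the right-hand side is at most $2\sqrt2\sqrt{w^2(A)-\nu(A)}$. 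Taking the supremum over $\|x\|=1$ finishes the proof.

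This is exactly the paper's proof. The paper first normalizes to $\|X\|,\|Y\|\le1$ and then substitutes $X\mapsto XB$, $Y\mapsto BY$, which amounts to the same thing. The special cases $X=Y=I$ and the interchange of $A$ and $B$ then follow as you say.
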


\begin{proof}
Take $x\in {\mathcal{H}}$ with $\|x\|=1$. Assuming $\|X\|\leq 1$ and $\|Y\|\leq 1$, we obtain 
	\begin{eqnarray*}
		|\langle (AX\pm YA)x,x\rangle|
		&\leq& \|A^*x\|+ \|Ax\| 
		\leq   \sqrt{2(\|A^*x\|^2+ \|Ax\|^2)}\\ 
		&\leq&  \sqrt{2\|AA^*+A^*A\|}
		\leq 2\sqrt{2}\sqrt{ w^2(A)-\nu(A)  } \quad (\textit{by Proposition \ref{prop2}}).
	\end{eqnarray*}
Taking the supremum over $\|x\|=1$, we get
	\begin{eqnarray}\label{eqnth1}
	w(AX\pm YA)&\leq& 2\sqrt{2}\sqrt{ w^2(A)-\nu(A) }.
	\end{eqnarray}
	We now consider general $X,Y$, where  $\max  \left\{\|X\|,\|Y\| \right\}\neq 0,$ otherwise Corollary \ref{cor7} holds trivially.   
Substituting $X$ by $\frac{X}{\max  \left\{\|X\|,\|Y\| \right\}}$ and $Y$  by $\frac{Y}{\max  \left\{\|X\|,\|Y\| \right\}}$ in (\ref{eqnth1}), we obtain
	\begin{eqnarray}\label{eqnth2}
	w(AX\pm YA)\leq 2\sqrt{2}\max  \left\{\|X\|,\|Y\| \right\}\sqrt{ w^2(A)-\nu(A) }.
	\end{eqnarray}
Again, substituting $X$ by $XB$ and $Y$ by $BY$ in (\ref{eqnth2}), we get
	\begin{eqnarray*}
		w(AXB\pm BYA)\leq 2\sqrt{2}\max  \left\{\|XB\|,\|BY\| \right\}\sqrt{ w^2(A)-\nu(A) },
	\end{eqnarray*}
	 as desired.
\end{proof}

Again, applying the inequality \eqref{pintu-002}, we can obtain the following numerical radius bounds for the generalized commutators and anti-commutators of operators.

\begin{cor}\label{cor8}
    Let $A,B,X,Y\in \mathcal{B}(\mathcal{H}).$ Then
    \begin{eqnarray*}
        w(AXB\pm BYA) &\leq& 2\sqrt{2} \max\{\|XB\|, \|BY\| \} \sqrt{w^2(A)-\delta(A)},
    \end{eqnarray*}
    
    where $\delta(A)$ is the same as \eqref{pintu-002}. 
    
    In particular, for $X=Y=I$,
     \begin{eqnarray}\label{pi-2}
        w(AB\pm BA) &\leq& 2\sqrt{2} \|B\| \sqrt{w^2(A)-\delta(A)}.
    \end{eqnarray}

    Interchanging $A$ and $B$, we get
     \begin{eqnarray}
        w(AB\pm BA) &\leq& 2\sqrt{2} \|A\| \sqrt{w^2(B)-\delta(B)}.
    \end{eqnarray}
\end{cor}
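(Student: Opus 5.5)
The plan is to copy the proof of Corollary \ref{cor7} verbatim, using \eqref{pintu-002} in place of Proposition \ref{prop2} at the single place where a lower bound for $w^2(A)$ is invoked. If $Re(A)=0$ or $Im(A)=0$, then $\delta(A)$ is not defined by the formula. We adopt the convention $\delta(A)=0$ in that case. The statement then reduces to the known bound $w(AXB\pm BYA)\le 2\sqrt2\max\{\|XB\|,\|BY\|\}\,w(A)$, which is exactly what the argument below gives with Kittaneh's bound $\frac14\|A^*A+AA^*\|\le w^2(A)$. So we assume $Re(A)\neq0\neq Im(A)$.

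\textbf{Step 1: the normalized case.} Assume $\|X\|\le1$ and $\|Y\|\le1$, and fix a unit vector $x$. Then
\[
|\langle (AX\pm YA)x,x\rangle|\le |\langle Xx,A^*x\rangle|+|\langle Ax,Y^*x\rangle|\le \|A^*x\|+\|Ax\|.
\]
Next,
\[
\|A^*x\|+\|Ax\| \le \sqrt{2(\|A^*x\|^2+\|Ax\|^2)} = \sqrt{2\langle (AA^*+A^*A)x,x\rangle}\le\sqrt{2\|AA^*+A^*A\|}.
\]
Inequality \eqref{pintu-002} rearranges to $\|A^*A+AA^*\|\le 4\big(w^2(A)-\delta(A)\big)$; in particular the quantity under the square root is nonnegative. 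Substituting this and taking the supremum over $x$ gives
\[
w(AX\pm YA)\le 2\sqrt2\sqrt{w^2(A)-\delta(A)}.
\]

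\textbf{Step 2: general $X$ and $Y$.} If $X=Y=0$ the claim is trivial. Otherwise, replace $X,Y$ by $X/m$ and $Y/m$ with $m=\max\{\|X\|,\|Y\|\}$, and use homogeneity of $w$. Then substitute $X\mapsto XB$ and $Y\mapsto BY$ to obtain the general inequality.

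\textbf{Step 3: the special cases.} Setting $X=Y=I$ yields \eqref{pi-2}. Swapping the roles of $A$ and $B$ is legitimate because $AB\pm BA$ equals $\pm(BA\pm AB)$ and $w$ is invariant under sign change; this gives the last inequality with $\delta(B)$.

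There is no real obstacle. The only point needing care is that $\delta(A)$ depends only on $A$, not on $X$, $Y$ or $B$, so the scaling in Step 2 leaves it unchanged.
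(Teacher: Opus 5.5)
Your Steps 1--3 follow the paper's own route. The paper obtains Corollary \ref{cor8} by rerunning the proof of Corollary \ref{cor7} with \eqref{pintu-002} in place of Proposition \ref{prop2}, and each of your manipulations is correct: the triangle inequality, $\|A^*x\|+\|Ax\|\le\sqrt{2\langle (AA^*+A^*A)x,x\rangle}$, the rescaling, the substitution $X\mapsto XB$, $Y\mapsto BY$, and the sign symmetry.

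The gap is the one step you call unproblematic: \eqref{pintu-002} is false as stated, and so is the corollary. Take $A=\begin{pmatrix}0&1\\0&0\end{pmatrix}$. Then $Re^2(A)=Im^2(A)=\frac14 I$ and $\|Re(A)\|=\|Im(A)\|=\frac12$. The operator inside the norm in $\delta(A)$ is therefore $(1+i)I$, of norm $\sqrt2$, so $\delta(A)=\frac{2-\sqrt2}{8}>0$. But $\frac14\|A^*A+AA^*\|=\frac14=w^2(A)$, so $\frac14\|A^*A+AA^*\|+\delta(A)>w^2(A)$. The corollary fails for the same $A$. For the unitary $B=\frac1{\sqrt2}\begin{pmatrix}1&-1\\1&1\end{pmatrix}$ one gets $AB+BA=\frac1{\sqrt2}\begin{pmatrix}1&2\\0&1\end{pmatrix}$, whose numerical radius is $\sqrt2$. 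The right-hand side of \eqref{pi-2}, however, equals $2\sqrt2\sqrt{\frac14-\frac{2-\sqrt2}{8}}=2^{1/4}<\sqrt2$. So no argument can prove the statement as written.

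The cause is the factor $i$ in $\delta(A)$. In Proposition \ref{prop1} the $i$ is legitimate, because there $\frac1{\sqrt2}(Re(A)+Im(A))+\frac{i}{\sqrt2}(Re(A)-Im(A))=e^{i\pi/4}A^*$ has norm $\|A\|$. Here, by contrast, $\frac14\|A^*A+AA^*\|=\frac12\|Re^2(A)+Im^2(A)\|$ involves a plain sum. The bound $\|P+Q\|\le\|P+iQ\|$ fails for positive $P,Q$ (take $P=Q=I$). Apply Maligranda's inequality \eqref{p0} to $x=Re^2(A)$, $y=Im^2(A)$ and use $\|Re(A)\|,\|Im(A)\|\le w(A)$. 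This gives a correct version of \eqref{pintu-002} with
\[
\tilde\delta(A)=\frac12\Bigl(2-\Bigl\|\frac{Re^2(A)}{\|Re(A)\|^2}+\frac{Im^2(A)}{\|Im(A)\|^2}\Bigr\|\Bigr)\min\left(\|Re(A)\|^2,\|Im(A)\|^2\right).
\]
With $\tilde\delta$ in place of $\delta$, your proof goes through verbatim. This corrected corollary is genuinely weaker than the stated one. Indeed, $\|P+iQ\|\le\|P+Q\|$ for positive $P,Q$, so $\tilde\delta(A)\le\delta(A)$; in the example above $\tilde\delta(A)=0$. Your convention $\delta(A)=0$ when $Re(A)=0$ or $Im(A)=0$ sensibly covers a case the paper ignores, but it does not affect this problem.
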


\begin{remark}
   (i) Clearly, the inequalities in Corollary \ref{cor7} and Corollary \ref{cor8} refine and generalize the inequality $w(AB\pm BA) \leq 2\sqrt{2} w(A) \|B\|,$ which was established by Fong and Holbrook \cite{Fong}. To show proper refinement, we take a simple matrix $A=\begin{pmatrix}
       1+2i&0\\
       0&0
   \end{pmatrix}$. Then we see that $\nu(A)=\frac12 (1-\frac{1}{\sqrt{2}})$ and $\delta(A)=1-\frac{1}{\sqrt{2}}>0$.

   (ii) Following the inequalities \eqref{pi-1} and \eqref{pi-2}, we conclude if $w(AB\pm BA) = 2\sqrt{2} w(A) \|B\|,$ then $\left\|\frac{(Re(A)+Im(A))^2}{\| Re(A)+Im(A)\|^2}+ i \frac{(Re(A)-Im(A))^2}{\| Re(A)-Im(A)\|^2} \right\|=\left\|\frac{Re^2(A)}{\| Re(A)\|^2}+ i \frac{Im^2(A)}{\| Im(A)\|^2} \right\|=2.$
    \end{remark}

\vskip0.2 cm

\medskip
\noindent \textit{Conflict of Interest Statement.} The authors state that there is no conflict of interest. 

\medskip
\noindent\textit{Data Availability Statement.} Data sharing is not applicable to this article as no datasets were generated or analysed during the current study.

\medskip

\bibliographystyle{amsplain}

\begin{thebibliography}{99}



\bibitem{Abu-Omar-Kittaneh} A. Abu-Omar and F. Kittaneh, Upper and lower bounds for the numerical radius with an application to involution operators, Rocky Mountain J. Math. 45 (2015), no. 4, 1055–1065.

\bibitem{Feki} N. Altwaijry, C. Conde, S. S. Dragomir and K. Feki, Further norm and numerical radii inequalities for operators involving a positive operator, AIMS Math. 10 (2025), no. 2, 2684–2696.


\bibitem{Bhunia-LAA} P. Bhunia, S. Bag and K. Paul, Numerical radius inequalities and its applications in estimation of zeros of polynomials,
Linear Algebra Appl. 573 (2019), 166–177.

\bibitem{Bhunia-BSM} P. Bhunia and K. Paul,
New upper bounds for the numerical radius of Hilbert space operators,
Bull. Sci. Math. 167 (2021), Paper No. 102959, 11 pp.

\bibitem{BP} P. Bhunia and K. Paul, Development of inequalities and characterization of equality conditions for the numerical radius,
Linear Algebra Appl. 630 (2021), 306–315.

\bibitem{Bhunia-book} P.~Bhunia, S.~S.~Dragomir, M.~S.~Moslehian and K.~Paul,
Lectures on numerical radius inequalities,
Infosys Sci. Found. Ser. Math. Sci.
Springer, Cham, 2022, xii+209 pp.



\bibitem{BHU} P. Bhunia, Improved bounds for the numerical radius via polar decomposition of operators,
Linear Algebra Appl. 683 (2024), 31–45.

\bibitem{BHU-G} P. Bhunia, Improved bounds for the numerical radius via a new norm on $B(H)$,
Georgian Math. J. 32 (2025), no. 4, 551–566.


\bibitem{Bhunia-RMJM} P. Bhunia, S. Jana and K. Paul, Refined inequalities for the numerical radius of Hilbert space operators,
Rocky Mountain J. Math. 55 (2025), no. 2, 323–332.

\bibitem{BKS} P. Bhunia, F. Kittaneh and S. Sahoo,  Improved numerical radius bounds using the Moore-Penrose inverse,
Linear Algebra Appl. 711 (2025), 1-16.


\bibitem{Bhunia-CMB} P. Bhunia, Inequalities of the $\ell^p$-operator norm for block matrices, Canad. Math. Bull. (2026), 12 pp. http://dx.doi.org/10.4153/S0008439526101714

\bibitem{Bourin} J.-C. Bourin and F. Hiai, Jensen and Minkowski inequalities for operator means and anti-norms, Linear Algebra Appl. 456 (2014) 22–53.

\bibitem{Buzano} 
M. L. Buzano, { Generalizzatione della disuguaglianza di Cauchy-Schwarz}, Rend. Semin. Mat. Univ. Politech. Torino 31(1971/73) (1974), 405--409.

\bibitem{DRAG} S. S. Dragomir,
Some inequalities for the norm and the numerical radius of linear operators in Hilbert spaces,
Tamkang J. Math. 39 (2008), no. 1, pp. 1-7.

\bibitem{Fong} C. K. Fong and J. A. R. Holbrook, Unitarily invariant operator norms, Canadian J. Math. 35 (1983), no. 2, 274–299.

\bibitem{Gustafson_Book_1997}
K. E. Gustafson, D. K. M. Rao, {Numerical range}, Springer, New York, 1997.


\bibitem{Jana} S. Jana, P. Bhunia and K. Paul, Euclidean operator radius inequalities of a pair of bounded linear operators and their applications,
Bull. Braz. Math. Soc. (N.S.) 54 (2023), no. 1, Paper No. 1, 14 pp.

\bibitem{Kittaneh88}  F. Kittaneh,  Notes on some inequalities for Hilbert space operators, Publ. Res. Inst. Math. Sci. 24 (1988), no. 2, 283–293.

\bibitem{Kittaneh1} F. Kittaneh, Numerical radius inequality and an estimate for the numerical radius of the Frobenius companion matrix, Studia Math. 158 (2003), no. 1, 11-17.

\bibitem{Kittaneh2} F. Kittaneh, Numerical radius inequalities for Hilbert space operators, Studia Math. 168 (2005), no. 1, 73-80.

\bibitem{Kit23} F. Kittaneh, H. R. Moradi and M. Sababheh, Sharper bounds for the numerical radius, Linear Multilinear Algebra 73 (2025), no. 10, 2309–2319. 


\bibitem{KMS} F. Kittaneh, H. R. Moradi and M. Sababheh,  Complementary bounds for the numerical and spectral radii,
Quaest. Math. 48 (2025), no. 6, 863–877.


\bibitem{Mali} L. Maligranda, Simple norm inequalities, Amer. Math. Monthly 113 (2006), no. 3, 256–260.


\bibitem{MOS} M. S. Moslehian, M. Sattari, K. Shebrawi, Extensions of Euclidean operator radius inequalities. Math. Scand. 120 (2017), no. 1, 129–144.

\bibitem{Popescu_MAMS_2009}
G. Popescu, {Unitary invariants in multivariable operator theory}, Mem. Amer. Math. Soc. 200 (2009), No. 941, vi+91 pp.

\bibitem{SM} M. Sababheh and H. R. Moradi,  New norm and numerical radius bounds, Internat. J. Math. 36 (2025), no. 1, Paper No. 2450066, 17 pp.

\bibitem{SMS} M. Sababheh, H. R. Moradi and S. Sahoo, Inner product inequalities with applications, Linear Multilinear Algebra 73 (2025), no 9, 2089–2102.  



\bibitem{V} M.P. Vasi\'c,  D. J. Ke\^cki\'c, Some inequalities for complex numbers, Math. Balkanica 1 (1971), 282-286.

\bibitem{Wu}  P. Y. Wu and H.-L. Gau. Numerical Ranges of Hilbert Space Operators.  Encyclopedia Math. Appl. 179, Cambridge University Press, Cambridge, 2021.








\end{thebibliography}

\end{document}